\begin{document}

\title*{Commutants in crossed product algebras for piecewise constant functions on the real line}
  \titlerunning{Commutants in crossed products} 
\author{Alex Behakanira Tumwesigye, Johan Richter, Sergei Silvestrov }
 \authorrunning{A. B. Tumwesigye, J. Richter \& S. Silvestrov}
\institute{Alex Behakanira Tumwesigye \at Department of Mathematics, College of Natural Sciences, Makerere University, Box 7062, Kampala, Uganda. \\ \email{alexbt@cns.mak.ac.ug}
\and Johan Richter
\at Department of Mathematics and Natural Sciences, Blekinge Institute of Technology, SE-37179 Karlskrona, Sweden. \\ \email{johan.richter@bth.se}
\and Sergei Silvestrov
\at Division of Applied Mathematics, School of Education, Culture and Communication, M\"alardalen University, Box 883, 72123 V\"aster{\aa}s, Sweden. \\ \email{sergei.silvestrov@mdh.se}
}

%
%
\maketitle

\abstract*{In this paper we consider commutants in crossed product algebras, for algebras of piece-wise constant functions on the real line acted on by the group of integers $\mathbb{Z}$. The algebra of piece-wise constant functions does not separate points of the real line, and interplay of the action with separation properties of the points or subsets of the real line by the function algebra become essential for many properties of the crossed product algebras and their subalgebras. In this article, we deepen investigation of properties of this class of crossed product algebras and interplay with dynamics of the actions. We describe the commutants and changes in the commutants in the crossed products for the canonical generating commutative function subalgebras of the algebra of piece-wise constant functions with common jump points when arbitrary number of jump points are added or removed in general positions, that is when corresponding constant value sets partitions of the real line change, and we give complete characterization of the set difference between commutants for the increasing sequence of subalgebras in crossed product algebras for algebras of functions that are constant on sets of a partition when partition is refined.}

\abstract{In this paper we consider commutants in crossed product algebras, for algebras of piece-wise constant functions on the real line acted on by the group of integers $\mathbb{Z}$. The algebra of piece-wise constant functions does not separate points of the real line, and interplay of the action with separation properties of the points or subsets of the real line by the function algebra become essential for many properties of the crossed product algebras and their subalgebras. In this article, we deepen investigation of properties of this class of crossed product algebras and interplay with dynamics of the actions. We describe the commutants and changes in the commutants in the crossed products for the canonical generating commutative function subalgebras of the algebra of piece-wise constant functions with common jump points when arbitrary number of jump points are added or removed in general positions, that is when corresponding constant value sets partitions of the real line change, and we give complete characterization of the set difference between commutants for the increasing sequence of subalgebras in crossed product algebras for algebras of functions that are constant on sets of a partition when partition is refined.}




\section{Introduction}
An important direction of investigation for any class of non-commutative algebras and rings, is the description of commutative subalgebras and commutative subrings. This is because such a description allows one to relate representation theory, non-commutative properties, graded structures, ideals and subalgebras, homological and other properties of non-commutative algebras to spectral theory, duality, algebraic geometry and topology naturally associated with commutative algebras. In representation theory, for example, semi-direct products or crossed products play a central role in the construction and classification of representations using the method of induced representations. When a non-commutative algebra is given, one looks for a subalgebra such that its representations can be studied and classified more easily and such that the whole algebra can be decomposed as a crossed product of this subalgebra by a suitable action.\par
When one has found a way to present a non-commutative algebra as a crossed product of a commutative subalgebra by some action on it, then it is important to know whether the subalgebra is maximal commutative, or if not, to find a maximal commutative subalgebra containing the given subalgebra. This maximality of a commutative subalgebra and related properties of the action are intimately related to the description and classification of representations of the non-commutative algebra.\par
Some work has been done in this direction \cite{Silvestrov1TRS,LiTRS,TomiyamaTRS} where the interplay between topological dynamics of the action on one hand and the algebraic property of the commutative subalgebra in the $C^*-$crossed product algebra $C(X)\rtimes \mathbb{Z}$ being maximal commutative on the other hand are considered. In \cite{Silvestrov1TRS}, an explicit description of the (unique) maximal commutative subalgebra containing a subalgebra $\mathcal{A}$ of $\mathbb{C}^X$ is given. In \cite{OinertTRS}, properties of commutative subrings and ideals in non-commutative algebraic crossed products by arbitrary groups are investigated and a description of the commutant of the base coefficient subring in the crossed product ring is given. More results on commutants in crossed products  and dynamical systems can be found in  \cite{SvenssonTRS,CarlsenTRS} and the references therein.

In this article, we consider commutants in crossed product algebras for algebras of piece-wise constant functions on the real line. In \cite{AlexTRS}, a description of the maximal commutative subalgebra  (commutant) of the crossed product algebra of the said algebra with $\mathbb{Z}$ was given for the case where we have $N$ fixed jumps and in \cite{Alex1TRS}, a comparison of commutants for an increasing sequence of algebras with a finite number of jumps added into one of the partition intervals was done. Here, we treat a more general case, whereby starting with an algebra $\mathcal{A}$ of piecewise constant functions with $N$ fixed jump points, we add a finite number of jumps, say $m$ arbitrarily and consider the algebra $\mathcal{A}_S$ of piecewise constant functions with $N+m$ jumps. We derive a condition for the algebras $\mathcal{A}$ and $\mathcal{A}_S$ to be invariant under a bijection $\sigma:\mathbb{R}\to \mathbb{R}$ and compare the commutants $\mathcal{A}'$ and $\mathcal{A}_S'.$
\section{Definitions and a preliminary result }
Let  $\mathcal{A}$ be any commutative  algebra. Using the notation in \cite{Silvestrov1TRS}, let $\phi:\mathcal{A}\rightarrow \mathcal{A}$ be any algebra automorphism on $\mathcal{A}$ and define $$\mathcal{A}\rtimes_{\phi}\mathbb{Z}:=\{f:\Bbb Z\rightarrow \mathcal{A}: f(n)=0\mbox{ except for a finite number of }n\}.$$
Then \cite{Silvestrov1TRS}  $\mathcal{A}\rtimes_{\phi}\mathbb{Z}$ is an associative $\mathbb{C}-$ algebra with respect to point-wise addition, scalar multiplication and multiplication defined by \emph{ twisted convolution}, $*$ as follows;
$$(f* g)(n)=\sum_{k\in \mathbb{Z}}f(k)\phi^k(g(n-k)),$$ where $\phi^k$ denotes the $k-$fold composition of $\phi$ with itself for positive $k$ and we use the obvious definition for $k\leq 0$
\begin{definition}\label{defn1TRS}
$\mathcal{A}\rtimes_{\phi}\mathbb{Z}$ as described above is called the crossed product algebra of $\mathcal{A}$ and $\mathbb{Z}$ under $\phi.$
\end{definition}
A useful and convenient way of working with $\mathcal{A}\rtimes_{\phi}\mathbb{Z}$, is to  write elements $f,g\in \mathcal{A}\rtimes_{\phi}\mathbb{Z}$ in the form $f=\sum_{n\in \Bbb Z}f_n\delta^n$ and $g=\sum_{n\in \Bbb Z}g_m\delta^m$ where $f_n=f(n),~g_m=g(m)$ and
$$\delta^n(k)=\begin{cases}1, ~~\mbox{if } k=n\\
0, ~~\mbox{if }k\neq n  .
\end{cases}$$ In the sum $\sum_{n\in \mathbb{Z}}f_n\delta^n$, we implicitly assume that $f_n=0$ except for a finite number of $n.$ Addition and scalar multiplication are canonically defined by the usual pointwise operations  and multiplication is determined by the relation
\begin{equation}\label{eqn1TRS}
(f_n\delta^n)*(g_m\delta^m)=f_n\phi^n(g_m)\delta^{n+m}
\end{equation} where $m,n\in \mathbb{Z}$ and $f_n,g_m\in \mathcal{A}.$
\begin{definition}\label{defn2TRS}
By the commutant $\mathcal{A}'$ of $\mathcal{A}$ in $\mathcal{B}\rtimes_{\phi}\mathbb{Z}$, where $A\subseteq B$, we mean $$\mathcal{A}':=\{f\in \mathcal{B}\rtimes_{\phi}\mathbb{Z}:fg=gf \mbox{ for every }g\in \mathcal{A}\}.$$
\end{definition}

Frequently the algebra $\mathcal{B}$ in the previous definition will be clear from context. 

It has been proven \cite{Silvestrov1TRS} that the commutant $\mathcal{A}'$ in $\mathcal{A}\rtimes_{\phi}\mathbb{Z}$  is commutative and thus, is the unique maximal commutative subalgebra containing $\mathcal{A}$.

We shall also use

\subsection{Automorphisms induced by bijections}
Now let $X$ be any set and $\mathcal{A}$ an algebra of complex valued functions on $X$. Let $\sigma:X\rightarrow X$ be any bijection such that  $\mathcal{A}$ is invariant under $\sigma$ and $\sigma^{-1},$ that is for every $h\in \mathcal{A},\ h\circ \sigma \in \mathcal{A}$ and $h\circ \sigma^{-1}\in \mathcal{A}.$
 Then $(X, \sigma)$ is a discrete dynamical system and $\sigma$ induces an automorphism $\tilde{\sigma}:\mathcal{A}\rightarrow \mathcal{A}$ defined by,
 \begin{equation}\label{eqn2TRS}
 \tilde{\sigma}(f)=f\circ \sigma^{-1}.
 \end{equation}
 Observe that since $\tilde{\sigma}=f\circ \sigma^{-1},$ then $\tilde{\sigma}^2(f)=\tilde{\sigma}(f\circ \sigma^{-1})=(f\circ \sigma^{-1})\circ \sigma^{-1}=f\circ \sigma^{-2}$ and in general $\tilde{\sigma}^n(f)=f\circ \sigma^{-n}$ for all $n\in \mathbb{Z}.$

  In \cite{AlexTRS}, a description of  the commutant of $\mathcal{A}'$ in the crossed product algebra $\mathcal{A}\rtimes_{\tilde{\sigma}}\Bbb Z$ for the  case where $\mathcal{A}$ is the algebra of functions that are constant on the sets of a partition was given. Below are some definitions and results that will be important in our study. The proofs of the theorems can be found in \cite{AlexTRS} and the references in there.
\begin{definition}
For any nonzero $n\in \Bbb Z,$ let
\begin{equation} \label{eqn3TRS}
Sep_{\mathcal{A}}^n(X):=\{x\in X~|~\exists\ h\in \mathcal{A}~:~h(x)\neq \tilde{\sigma}^n(h)(x)\},\\
\end{equation}
\end{definition}
The following theorem has been proven in \cite{Silvestrov1TRS}.
\begin{theorem}\label{thm1TRS}
The unique maximal commutative subalgebra of $\mathcal{A}\rtimes_{\tilde{\sigma}}\Bbb Z$ that contains $\mathcal{A}$ is precisely the set of elements
\begin{equation}\label{eqn4TRS}
\mathcal{A}'=\left\{\sum_{n\in \Bbb Z}f_n\delta^n~|~\mbox{for all }n\in \Bbb Z:~f_n|Sep_{\mathcal{A}}^n(X)\equiv 0\right\} \end{equation}
\end{theorem}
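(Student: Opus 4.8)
The plan is to reduce membership in the commutant to a coefficientwise, and then pointwise, condition by a direct computation with the twisted convolution \eqref{eqn1TRS}. Since $\mathcal{A}$ is embedded in $\mathcal{A}\rtimes_{\tilde{\sigma}}\mathbb{Z}$ as $\{a\delta^0 : a\in\mathcal{A}\}$, an element $f=\sum_{n\in\mathbb{Z}}f_n\delta^n$ lies in $\mathcal{A}'$ precisely when $f*(a\delta^0)=(a\delta^0)*f$ for every $a\in\mathcal{A}$. Writing everything out coefficientwise turns this single algebraic identity into a family of constraints indexed by $n$, and the remaining work is to recognize exactly which points $x$ these constraints touch.

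First I would compute both products using \eqref{eqn1TRS} together with $\tilde{\sigma}^n(a)=a\circ\sigma^{-n}$. On the one hand,
\begin{equation*}
f*(a\delta^0)=\sum_{n\in\mathbb{Z}}f_n\,\tilde{\sigma}^n(a)\,\delta^n,
\end{equation*}
while on the other hand
\begin{equation*}
(a\delta^0)*f=\sum_{n\in\mathbb{Z}}a\,f_n\,\delta^n .
\end{equation*}
Equating the coefficients of $\delta^n$ and using commutativity of $\mathcal{A}$, the condition $f\in\mathcal{A}'$ becomes: for every $n\in\mathbb{Z}$ and every $a\in\mathcal{A}$,
\begin{equation*}
f_n\cdot\bigl(\tilde{\sigma}^n(a)-a\bigr)=0 \quad\text{in }\mathcal{A}.
\end{equation*}
Because the elements of $\mathcal{A}$ are genuine functions on $X$ with pointwise operations, this is equivalent to the pointwise statement $f_n(x)\bigl(\tilde{\sigma}^n(a)(x)-a(x)\bigr)=0$ for all $x\in X$.

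The decisive step is then the quantifier interchange relating the universal quantifier over $a\in\mathcal{A}$ in the commutant condition to the existential quantifier in the definition \eqref{eqn3TRS} of $Sep_{\mathcal{A}}^n(X)$. Fix $n$ and $x$. If $x\in Sep_{\mathcal{A}}^n(X)$, then by definition there is some $h\in\mathcal{A}$ with $\tilde{\sigma}^n(h)(x)\neq h(x)$; choosing $a=h$ forces $f_n(x)=0$. Conversely, if $x\notin Sep_{\mathcal{A}}^n(X)$, then $\tilde{\sigma}^n(a)(x)=a(x)$ for every $a\in\mathcal{A}$, so the pointwise identity holds automatically and places no restriction on $f_n(x)$. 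Hence the full set of constraints is exactly $f_n|Sep_{\mathcal{A}}^n(X)\equiv 0$ for every $n$, which is precisely \eqref{eqn4TRS}.

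I expect the computation itself to be entirely routine; the only point demanding care is this quantifier step, together with the observation that the constraints decouple completely across distinct indices $n$ and distinct points $x$, so that no interaction between coefficients can arise. It is also worth noting the degenerate case $n=0$: there the identity $\tilde{\sigma}^0(a)=a$ holds for all $a$, so $Sep_{\mathcal{A}}^0(X)=\emptyset$ and the coefficient $f_0$ is unconstrained, consistent with the stated description. Finally, that the resulting $\mathcal{A}'$ is itself commutative, and hence is the maximal commutative subalgebra containing $\mathcal{A}$, follows from the cited general result rather than from the computation above.
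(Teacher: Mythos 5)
Your argument is correct and complete: the coefficientwise computation of $f*(a\delta^0)$ versus $(a\delta^0)*f$, the pointwise reformulation using that $\mathcal{A}$ consists of functions under pointwise operations, and the quantifier interchange against the definition of $Sep_{\mathcal{A}}^n(X)$ together characterize the commutant exactly as in \eqref{eqn4TRS}, and you correctly defer the commutativity (hence maximality and uniqueness) of $\mathcal{A}'$ to the cited general fact. Note that the paper itself gives no proof of this theorem --- it is quoted from the reference of Svensson, Silvestrov and De Jeu --- and your computation is essentially the standard argument given there, so there is nothing to fault and no genuinely different route being taken.
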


\section{Commutants in crossed product algebras for piecewise constant functions on the real line}
Our aim is to compare commutants for algebras of piecewise constant functions defined on a real line when jump points are added arbitrarily. First let's state some results already known.\par
Let $\mathcal{A}$ be the algebra of piece-wise constant functions $f:\Bbb R\rightarrow \Bbb R$ with $N$ fixed jumps at points $t_1,t_2,\cdots,t_N.$ Partition $\Bbb R$ into $N+1$ intervals $I_0,I_1,\cdots,I_N$ where $I_{\alpha}=(t_{\alpha},t_{\alpha +1})$ with $t_0=-\infty$ and $t_{N+1}=\infty.$  By looking at jump points as intervals of zero length, we can write $\mathbb{R}=\bigcup\limits_{\alpha=0}^{2N} I_{\alpha}$ where $I_{\alpha}$ is as described above for $\alpha=0,1,\cdots N$ and $I_{N+\alpha}=\{t_{\alpha} \}$ for $\alpha=1,2,\cdots,N.$ Then every $h\in \mathcal{A}$ can be written as
\begin{equation}\label{eqn5TRS}
h(x)=\sum_{\alpha=0}^{2N}a_{\alpha}\chi_{I_{\alpha}}(x),
\end{equation} where $\chi_{I_{\alpha}}$ is the characteristic function of $I_{\alpha}$ and $a_{\alpha}$ are some constants.\par
 Let $\sigma :\Bbb R \rightarrow \Bbb R$ be any bijection on $\Bbb R$ such that $\mathcal{A}$ is invariant under $\sigma$ . The following lemma gives the necessary and sufficient conditions for $(\Bbb R, \sigma)$ to be a discrete dynamical system.
\begin{lemma}\label{lem1TRS}
The algebra $\mathcal{A}$ is invariant under both $\sigma$ and $\sigma^{-1}$ if and only if the following conditions hold.
\begin{enumerate}
\item $\sigma$ (and $\sigma^{-1}$) maps  each jump point $t_k,~k=1,\cdots,N$ onto another jump point.
\item $\sigma$ maps every interval $I_{\alpha},~\alpha=0,1,\cdots N$ bijectively onto any of the other intervals $I_0,I_1 \cdots I_N.$
\end{enumerate}
\end{lemma}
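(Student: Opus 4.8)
The plan is to exploit the fact, visible from the representation \eqref{eqn5TRS}, that $\mathcal{A}$ is precisely the algebra of functions that are constant on each block of the partition $\mathbb{R}=\bigcup_{\alpha=0}^{2N} I_\alpha$, and that $\mathcal{A}$ contains the characteristic function $\chi_{I_\alpha}$ of every block. From this I would first record the separation principle: for $p,q\in\mathbb{R}$ one has $h(p)=h(q)$ for all $h\in\mathcal{A}$ if and only if $p$ and $q$ lie in the same block $I_\alpha$. Indeed, if they lie in distinct blocks $I_\alpha\neq I_\beta$ then $\chi_{I_\alpha}$ separates them, while if they lie in the same block every element of $\mathcal{A}$, being constant there, agrees at $p$ and $q$.

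With this in hand I would translate the invariance condition. Since $h\circ\sigma\in\mathcal{A}$ precisely when $h\circ\sigma$ is constant on each block, and $h\circ\sigma$ is constant on $I_\alpha$ for every $h\in\mathcal{A}$ exactly when $\sigma(x)$ and $\sigma(y)$ are never separated by $\mathcal{A}$ for $x,y\in I_\alpha$, the separation principle shows that $\mathcal{A}$ is invariant under $\sigma$ if and only if $\sigma$ maps each block $I_\alpha$ into a single block. Applying the same reasoning to $\sigma^{-1}$, invariance under both means that $\sigma$ and $\sigma^{-1}$ each carry every block into a single block; write $\sigma(I_\alpha)\subseteq I_{\tau(\alpha)}$ and $\sigma^{-1}(I_\alpha)\subseteq I_{\rho(\alpha)}$ for the induced assignments $\tau,\rho$ on indices.

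Next I would upgrade ``into'' to ``bijectively onto.'' From $\sigma(I_\alpha)\subseteq I_{\tau(\alpha)}$ one gets $I_\alpha\subseteq\sigma^{-1}(I_{\tau(\alpha)})\subseteq I_{\rho(\tau(\alpha))}$; since distinct blocks of a partition are disjoint, a block contained in a block forces equality, so $\rho\circ\tau=\mathrm{id}$ and symmetrically $\tau\circ\rho=\mathrm{id}$, making $\tau$ a permutation of the blocks. The same two inclusions give $I_{\tau(\alpha)}=\sigma(\sigma^{-1}(I_{\tau(\alpha)}))\subseteq\sigma(I_\alpha)\subseteq I_{\tau(\alpha)}$, whence $\sigma(I_\alpha)=I_{\tau(\alpha)}$, and as $\sigma$ is injective this restriction is a bijection of $I_\alpha$ onto $I_{\tau(\alpha)}$. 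I would then separate the two block types by cardinality: an open interval is infinite while a jump-point block is a singleton, and a bijection preserves cardinality, so $\sigma$ must send open-interval blocks to open-interval blocks and singleton blocks to singleton blocks, which is exactly conditions (1) and (2). The converse is routine: if (1) and (2) hold then $\sigma$ permutes the blocks and carries each into a single block, so every $h\in\mathcal{A}$ has $h\circ\sigma$ constant on each block, i.e. $h\circ\sigma\in\mathcal{A}$, and likewise for $\sigma^{-1}$.

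I expect the main obstacle to be the middle step, namely promoting the mere inclusions $\sigma(I_\alpha)\subseteq I_{\tau(\alpha)}$ to genuine bijections of blocks. This is where invariance under both $\sigma$ and $\sigma^{-1}$ (rather than $\sigma$ alone) is indispensable, and where the disjointness of partition blocks must be invoked carefully to rule out a block mapping strictly inside another.
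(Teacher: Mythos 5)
Your argument is correct and complete, and it is essentially the intended one: the paper states Lemma~\ref{lem1TRS} without giving a proof (deferring to \cite{AlexTRS}), and the underlying idea is exactly the one you use, namely that invariance of the algebra of functions constant on the blocks of the partition $\mathbb{R}=\bigcup_{\alpha=0}^{2N}I_\alpha$ is equivalent to $\sigma$ permuting the blocks (compare the general-partition version, Lemma~\ref{lem5TRS}). Your separation principle, the promotion of the inclusions $\sigma(I_\alpha)\subseteq I_{\tau(\alpha)}$ to block bijections using invariance under both $\sigma$ and $\sigma^{-1}$, and the cardinality argument separating singleton blocks from interval blocks are all sound and fill in the details the paper omits.
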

\begin{remark}\label{Remark1TRS}
It is important to note that our algebras are isomorphic to certain function algebras on finite sets, ie certain finite dimensional algerbras. Because of the connection with other types of function algebras on the real line, explained in the introduction, we prefer to phrase things in terms of the algebra of piecewise constant functions. For more details of this isomorphism see  \cite[Remark 3.2]{AlexTRS}.
\end{remark}

 Let $\sigma :\Bbb R \rightarrow \Bbb R$ be any bijection on $\Bbb R$ such that $\mathcal{A}$ is invariant under $\sigma$, $\tilde{\sigma}:\mathcal{A}\rightarrow \mathcal{A}$ be the automorphism on $\mathcal{A}$ induced by $\sigma$, as given by \eqref{eqn2TRS} and consider the crossed product algebra $\mathcal{A}\rtimes_{\tilde{\sigma}}\mathbb{Z}$. The following proposition gives the description of $Sep_{\mathcal{A}}^n(\Bbb R)$ for any $n\in \Bbb Z.$
\begin{proposition}\label{Propn1TRS}
Let $\mathcal{A}$ be the algebra of piecewise constant functions on the real line with $N$ fixed jumps as described above and let $\sigma :\Bbb R \rightarrow \Bbb R$ be any bijection on $\Bbb R$ such that $\mathcal{A}$ is invariant under $\sigma$ (and $\sigma^{-1}$). Let $\tilde{\sigma}:\mathcal{A}\rightarrow \mathcal{A}$ be the automorphism on $\mathcal{A}$ induced by $\sigma$.
 Then for every $n\in \Bbb Z,$
\begin{equation}
{Sep_{\mathcal{A}}^n}(\Bbb R)=\bigcup_{k\nmid n}C_k, \label{eqn6TRS}
\end{equation}
where \begin{align}\label{eqn7TRS}
C_k& :=\left\{x\in \mathbb{R}~|~k\mbox{ is the smallest positive integer such that  } \exists\ I_{\alpha},\text{ such that } x,\sigma^k(x)\in I_{\alpha}\right.\nonumber \\
&\qquad \left. {}\text{ for some }\alpha=0,1,\cdots,2N\right\}.
\end{align}
\end{proposition}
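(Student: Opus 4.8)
The plan is to reduce everything to the combinatorics of the permutation that $\sigma$ induces on the finite collection of intervals $\{I_\alpha\}_{\alpha=0}^{2N}$. First I would unwind the definition \eqref{eqn3TRS} of $Sep_{\mathcal{A}}^n(\mathbb{R})$: since $\tilde{\sigma}^n(h)(x) = h(\sigma^{-n}(x))$, a point $x$ lies in $Sep_{\mathcal{A}}^n(\mathbb{R})$ precisely when some $h \in \mathcal{A}$ takes different values at $x$ and at $\sigma^{-n}(x)$. Because every $h \in \mathcal{A}$ is constant on each $I_\alpha$ and these constants may be chosen freely, as is visible from the representation \eqref{eqn5TRS}, the algebra $\mathcal{A}$ separates two points if and only if they lie in distinct intervals of the partition. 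Hence $x \notin Sep_{\mathcal{A}}^n(\mathbb{R})$ exactly when $x$ and $\sigma^{-n}(x)$ lie in a common interval $I_\alpha$.

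Next I would invoke Lemma \ref{lem1TRS}: its two conditions say that $\sigma$ permutes the open intervals $I_0, \ldots, I_N$ among themselves and permutes the degenerate intervals $I_{N+1}, \ldots, I_{2N}$ (the jump points) among themselves. Thus $\sigma$ induces a permutation $\pi$ of the finite index set $\{0,1,\ldots,2N\}$ determined by $\sigma(I_\alpha) = I_{\pi(\alpha)}$, and if $x \in I_\alpha$ then $\sigma^k(x) \in I_{\pi^k(\alpha)}$ for every $k \in \mathbb{Z}$. Since $\{0,\ldots,2N\}$ is finite, each $\alpha$ has a well-defined \emph{period} $c(\alpha)$, namely the length of the $\pi$-cycle through $\alpha$, i.e.\ the least positive integer with $\pi^{c(\alpha)}(\alpha) = \alpha$. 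For $x \in I_\alpha$ the smallest positive $k$ with $x, \sigma^k(x)$ in a common interval is therefore exactly $c(\alpha)$, so the sets in \eqref{eqn7TRS} are simply $C_k = \bigcup_{\alpha : c(\alpha) = k} I_\alpha$, and as $k$ ranges over the distinct cycle lengths these partition $\mathbb{R}$.

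Finally I would match the two descriptions. Using $\sigma^{-n}(x) \in I_{\pi^{-n}(\alpha)}$, a point $x \in I_\alpha$ fails to be separated from $\sigma^{-n}(x)$ if and only if $\pi^{-n}(\alpha) = \alpha$, which for a permutation is equivalent to $\pi^{n}(\alpha) = \alpha$, hence to $c(\alpha) \mid n$. Therefore $x \in Sep_{\mathcal{A}}^n(\mathbb{R})$ if and only if $c(\alpha) \nmid n$, i.e.\ if and only if $x$ lies in some $C_k$ with $k \nmid n$, which is exactly the claimed identity \eqref{eqn6TRS}.

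The only genuinely delicate points are bookkeeping ones: confirming that $\mathcal{A}$ separates points exactly according to the partition, so that the $2N+1$ intervals, including the zero-length ones, are precisely the $\mathcal{A}$-indistinguishability classes, and checking that the ``least positive $k$'' in \eqref{eqn7TRS} is well defined and coincides with the cycle length $c(\alpha)$. I expect the main obstacle to be a matter of care rather than depth: reconciling the positive exponent $\sigma^k$ used to define $C_k$ with the negative exponent $\sigma^{-n}$ entering through $\tilde{\sigma}^n$, which is resolved by the elementary observation that $\pi^{-n}$ and $\pi^{n}$ share the same fixed points.
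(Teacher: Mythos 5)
Your proof is correct and takes essentially the intended route: the paper does not reprove Proposition~\ref{Propn1TRS} but defers to \cite{AlexTRS}, where the argument is exactly your reduction to the permutation $\pi$ that $\sigma$ induces on the finite family of $2N+1$ intervals (cf.\ Remark~\ref{Remark1TRS}). The two bookkeeping points you flag are handled correctly --- $\mathcal{A}$ separates points precisely according to the partition since the constants $a_\alpha$ in \eqref{eqn5TRS} are arbitrary, and $\pi^{-n}$ and $\pi^{n}$ have the same fixed points, so the sign discrepancy between $\sigma^{-n}$ and $\sigma^{k}$ is harmless.
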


\begin{theorem}\label{thm2TRS}
Let $\mathcal{A}$ be the algebra of piece-wise constant functions $f:\Bbb R\rightarrow \Bbb R$ with $N$ fixed jumps at points $t_1,\cdots,t_N$ as described above, $\sigma :\Bbb R \rightarrow \Bbb R$ be any bijection on $\Bbb R$ such that $\mathcal{A}$ is invariant under both $\sigma$ and $\sigma^{-1}$ and let $\tilde{\sigma}:\mathcal{A}\rightarrow \mathcal{A}$ be the automorphism on $\mathcal{A}$ induced by $\sigma$.
 Then the unique maximal commutative subalgebra of $\mathcal{A}\rtimes_{\tilde{\sigma}}\Bbb Z$ that contains $\mathcal{A}$ is given by,
\[\mathcal{A}'=\left\{\sum_{n\in \Bbb Z}f_n\delta^n\ |\ f_n\equiv 0\ \text{on }C_k \text{ for some $n$ such that }k\nmid  n \right\},\]
where $C_k$ is as defined in \eqref{eqn7TRS}.
\end{theorem}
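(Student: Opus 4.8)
The plan is to derive this description by substituting the explicit computation of the separation sets $Sep_{\mathcal{A}}^n(\mathbb{R})$ from Proposition~\ref{Propn1TRS} into the abstract characterization of the commutant furnished by Theorem~\ref{thm1TRS}. First I would note that the standing hypotheses here---that $\mathcal{A}$ is invariant under both $\sigma$ and $\sigma^{-1}$, and that $\tilde{\sigma}$ is the induced automorphism---are exactly those under which Theorem~\ref{thm1TRS} applies with $X=\mathbb{R}$. That theorem then asserts that an element $\sum_{n\in\mathbb{Z}}f_n\delta^n$ lies in $\mathcal{A}'$ if and only if $f_n$ restricts to zero on $Sep_{\mathcal{A}}^n(\mathbb{R})$ for every $n\in\mathbb{Z}$.

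Next I would invoke Proposition~\ref{Propn1TRS}, which identifies $Sep_{\mathcal{A}}^n(\mathbb{R})$ with the union $\bigcup_{k\nmid n}C_k$. Inserting this into the preceding condition, membership in $\mathcal{A}'$ becomes the requirement that, for each $n$, the coefficient $f_n$ vanish identically on $\bigcup_{k\nmid n}C_k$. The only genuine (and entirely elementary) step is to observe that a function vanishes on a union of sets exactly when it vanishes on each set in that union; thus $f_n|_{\bigcup_{k\nmid n}C_k}\equiv 0$ is equivalent to requiring $f_n|_{C_k}\equiv 0$ for every $k$ with $k\nmid n$. Collecting these conditions over all $n$ yields precisely the stated form of $\mathcal{A}'$.

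I do not anticipate any real obstacle, since all the substantive work has already been carried out in establishing Theorem~\ref{thm1TRS} and Proposition~\ref{Propn1TRS}: the present statement is simply the concrete specialization of the general commutant formula to the partition sets $C_k$ of the piecewise constant algebra. The only point deserving mild care is to confirm that the hypotheses of both earlier results hold verbatim in the present setting---which they do by assumption---so that the two may be chained together without further argument.
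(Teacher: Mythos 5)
Your proposal is correct and follows exactly the paper's own argument: the paper likewise obtains the result by substituting the description of $Sep_{\mathcal{A}}^n(\mathbb{R})=\bigcup_{k\nmid n}C_k$ from Proposition~\ref{Propn1TRS} into the general commutant formula of Theorem~\ref{thm1TRS} and unpacking the condition of vanishing on a union into vanishing on each $C_k$ with $k\nmid n$. Your reading of the (somewhat loosely quantified) displayed condition as ``$f_n$ vanishes on $C_k$ for every $n$ and every $k$ with $k\nmid n$'' is the intended one and matches the paper.
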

\begin{proof}
It has been proven that the unique maximal commutative subalgebra $\mathcal{A}'$, of $\mathcal{A}\rtimes_{\tilde{\sigma}}\Bbb Z$ that contains $\mathcal{A}$ is given by
\begin{equation}\label{eqn8TRS}
\mathcal{A}'=\left\{\sum_{n\in \Bbb Z}f_n\delta^n~|~\mbox{for all }n\in \Bbb Z:~f_n|Sep_{\mathcal{A}}^n(X)\equiv 0\right\}. \end{equation}
Therefore, comparing \eqref{eqn6TRS} and \eqref{eqn8TRS}, we have;
\begin{equation}\label{eqn9TRS}
\mathcal{A}'=\left\{\sum_{n\in \Bbb Z}f_n\delta^n\ |\ f_n\equiv 0\ \text{on }C_k \text{ for some  $n$ such that }k\nmid  n \right\}.\end{equation}
 \end{proof}
 \begin{remark}\label{Remark2TRS}
 \end{remark}
 The crossed product algebra $\mathcal{A}\rtimes_{\tilde{\sigma}}\mathbb{Z}$ is a strongly $\mathbb{Z}-$graded algebra but the commutant $\mathcal{A}'$ is $\mathbb{Z}-$graded but not strongly $\mathbb{Z}-$graded as can be seen from the following observation. \par
 Observe that we can write $\mathcal{A}'$ as
 $$
 \mathcal{A}'=\bigoplus_{n\in\mathbb{Z}} \mathcal{A}_n'
 $$
 where $$\mathcal{A}_n'=\{f_n\delta^n\ :\ f_n=0 \text{ on }Sep_{\mathcal{A}}^n(\mathbb{R})\}.$$
 Therefore, if $f_n\delta^n\in \mathcal{A}_n'$ and $f_m\delta^m\in \mathcal{A}_m',$ then
 $$
 f_n\delta^n * f_m\delta^m=f_n\tilde{\sigma}^n(f_m)\delta^{n+m}
 $$ will be zero on $Sep_{\mathcal{A}}^n(\mathbb{R}).$ Since $Sep_{\mathcal{A}}^{n+m}\subsetneq Sep_{\mathcal{A}}^n(\mathbb{R}),$ we conclude that $\mathcal{A}'$ is not strongly $\mathbb{Z}-$graded.
\section{Jump points added arbitrarily}
Let $\mathcal{A}$ be the algebra of piece-wise constant functions $f:\Bbb R\rightarrow \Bbb R$ with $N$ fixed jumps at points $t_1,t_2,\cdots,t_N.$ Partition $\Bbb R$ into $N+1$ intervals $I_0,I_1,\cdots,I_N$ where $I_{\alpha}=(t_{\alpha},t_{\alpha +1})$ with $t_0=-\infty$ and $t_{N+1}=\infty.$  By looking at jump points as intervals of zero length, we can write $\mathbb{R}=\cup I_{\alpha}$ where $I_{\alpha}$ is as described above for $\alpha=0,1,\cdots N$ and $I_{N+\alpha}=\{t_{\alpha} \}$ for $\alpha=1,2,\cdots,N.$ Suppose $S=\{s_1,\cdots,s_m\}$ is a set of points in $\mathbb{R}$ and let $\mathcal{A}_S=\mathcal{A}_{t_1,t_2,\cdots,t_N,s_1,\cdots,s_m}$ be an algebra of piecewise constant functions on $\mathbb{R}$ with at most $N+m$ fixed jumps at points $t_1,\cdots,t_N,s_1,\cdots,s_m.$ We want to do the following.
\begin{enumerate}
\item Derive conditions under which $\mathcal{A}$ and $\mathcal{A}_S$ are both invariant under a bijection $\sigma:\mathbb{R}\to \mathbb{R}.$
\item Derive an expression for $Sep_{\mathcal{A}_S}^n(\mathbb{R})$ for any $n\in \mathbb{Z}$, comparing it with $Sep_{\mathcal{A}}^n(\mathbb{R})$ and find the commutant $\mathcal{A}_{S}'.$
\end{enumerate}
\subsection{A condition for invariance }
Since $\mathcal{A}$ is a subalgebra of $\mathcal{A}_S,$ invariance of both algebras under $\sigma$ ensures that the crossed product algebra $\mathcal{A}\rtimes_{\tilde{\sigma}}\mathbb{Z}$ is a subalgebra of the crossed product algebra $\mathcal{A}_S\rtimes_{\tilde{\sigma}}\mathbb{Z}$ and therefore we can compare the respective commutants $\mathcal{A}'$ and $\mathcal{A}_S',$ provided that we understand $A'$ to mean the commutant of $A$ in $\mathcal{A}_S\rtimes_{\tilde{\sigma}}\mathbb{Z}$.
The following Lemma gives a condition under which the algebras $\mathcal{A}$ and $\mathcal{A}_S$ are both invariant under a bijection $\sigma:\mathbb{R}\to \mathbb{R}.$
\begin{lemma}\label{lem2TRS}
Suppose that the jump points $s_1,\cdots,s_m$ are added into the intervals $I_{\alpha_1},\cdots,I_{\alpha_m}$ respectively, that is, $s_i\in I_{\alpha_i}$ for $i=1,\cdots,n.$ Let $\sigma:\mathbb{R}\to\mathbb{R}$ be a bijection such that $\mathcal{A}$ and $\mathcal{A}_S$ are both invariant under $\sigma$ (and $\sigma^{-1}$). Then
\begin{equation}\label{eqn10TRS}
\sigma\left(\cup_{i=1}^m I_{\alpha_i}\right)=\cup_{i=1}^m I_{\alpha_i}
\end{equation}
\end{lemma}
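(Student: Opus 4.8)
The plan is to apply Lemma~\ref{lem1TRS} to both algebras $\mathcal{A}$ and $\mathcal{A}_S$ and then to track how the added jump points $s_1,\dots,s_m$ are moved by $\sigma$. First I would record what the invariance of each algebra gives. Since $\mathcal{A}$ is invariant under $\sigma$ and $\sigma^{-1}$, Lemma~\ref{lem1TRS} says that $\sigma$ permutes the old jump points $\{t_1,\dots,t_N\}$ and maps each open interval $I_\alpha$, $\alpha=0,\dots,N$, bijectively onto one of the open intervals $I_0,\dots,I_N$. Applying the same lemma to $\mathcal{A}_S$, whose jump set is $\{t_1,\dots,t_N,s_1,\dots,s_m\}$, tells us that $\sigma$ permutes this larger jump set as well.

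Next I would establish that $\sigma$ in fact restricts to a bijection of the set $\{s_1,\dots,s_m\}$ of newly added points. Because each $s_i$ lies in an open interval $I_{\alpha_i}$, it is distinct from every $t_k$. As $\sigma$ already permutes $\{t_1,\dots,t_N\}$ onto itself, no $s_i$ can be sent to some $t_k$ without producing two distinct $\sigma$-preimages of $t_k$, contradicting injectivity of $\sigma$. Since $\sigma$ permutes the whole set $\{t_k\}\cup\{s_i\}$ and fixes the subset $\{t_k\}$ setwise, it must therefore permute the complementary set $\{s_1,\dots,s_m\}$. I write $\sigma(s_i)=s_{\pi(i)}$ for the resulting permutation $\pi$ of $\{1,\dots,m\}$.

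Then I would pin down the image of each interval $I_{\alpha_i}$. By the invariance of $\mathcal{A}$, the image $\sigma(I_{\alpha_i})$ is one of the open intervals $I_0,\dots,I_N$, and it contains $\sigma(s_i)=s_{\pi(i)}$. Since the open intervals partition $\mathbb{R}\setminus\{t_1,\dots,t_N\}$ and $s_{\pi(i)}\in I_{\alpha_{\pi(i)}}$, the unique open interval containing $s_{\pi(i)}$ is $I_{\alpha_{\pi(i)}}$, so $\sigma(I_{\alpha_i})=I_{\alpha_{\pi(i)}}$. Taking the union over $i$ and using that $\pi$ is a bijection, so that $\{\alpha_{\pi(1)},\dots,\alpha_{\pi(m)}\}=\{\alpha_1,\dots,\alpha_m\}$ as sets, gives $\sigma\bigl(\bigcup_{i=1}^{m} I_{\alpha_i}\bigr)=\bigcup_{i=1}^{m} I_{\alpha_{\pi(i)}}=\bigcup_{i=1}^{m} I_{\alpha_i}$, which is exactly \eqref{eqn10TRS}.

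I expect the only real subtlety, rather than a genuine obstacle, to be the step showing that $\sigma$ cannot move a new point $s_i$ onto an old jump point $t_k$; this rests on the injectivity of $\sigma$ together with the fact that $\sigma$ already exhausts $\{t_k\}$ as a permutation of itself. The bookkeeping with possibly repeated indices $\alpha_i$ (several added points may lie in the same interval) is harmless, since only the underlying set of intervals enters the union.
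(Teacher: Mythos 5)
Your proof is correct and rests on the same key observation as the paper's: since $\sigma$ already permutes $\{t_1,\dots,t_N\}$, injectivity forbids any $s_i$ from landing on a $t_k$, so the new jump points are permuted among themselves and each $I_{\alpha_i}$ must map onto the interval containing $\sigma(s_i)$. The paper phrases this as a proof by contradiction while you argue directly (and slightly more explicitly, identifying $\sigma(I_{\alpha_i})=I_{\alpha_{\pi(i)}}$), but the underlying argument is the same.
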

\begin{proof}
Suppose $\sigma(I_{\alpha_i})=I_{\beta}$ for some $\beta\notin \{\alpha_1,\cdots,\alpha_m\}.$ Since $s_i\in I_{\alpha_i}$ is a jump point and $\mathcal{A}_S$ is invariant under $\sigma$, then $\sigma(s_i)$ must be a jump point. Therefore $\sigma(s_i)\in \{t_1,t_2,\cdots,t_N\}.$ Since $\mathcal{A}$ is invariant under $\sigma,$ then $\sigma(\{t_1,\cdots,t_n\})=\{t_1,\cdots,t_N\}.$ Therefore, we have that $\sigma(\{t_1,\cdots,t_n,s_i\})=\{t_1,\cdots,t_N\},$ which contradicts bijectivity of $\sigma.$\end{proof}
\section{Finitely many jump points added}
Suppose a finite number of jump points, $s_1,s_2,\cdots, s_m$ are added into intervals, say $I_{\alpha_1},I_{\alpha_2},\cdots,I_{\alpha_r}$ with $r\leqslant m.$ Then either these jump points are added into the same interval, say $I_{\alpha_0}$ or into different intervals. As mentioned before, a detailed description of the commutant for the case when jump points are added into the same interval was done in \cite{Alex1}. Therefore we concentrate on the case when jump points are added into different intervals.
\subsection{Jumps added into different intervals}
 Suppose that the jump points $s_1,s_2,\cdots,s_m$ are added into distinct intervals, say, $I_{\alpha_1},I_{\alpha_2},\cdots,I_{\alpha_r},$ with $r\leqslant m.$ As before, let $\mathcal{A}$ be the algebra of piecewise constant functions with $N$ fixed jumps at $t_1, \cdots, t_N$ and let $\mathcal{A}_S$ denote the algebra of piecewise constant functions with $N+m$ fixed jumps at points $t_1,t_2,\cdots,t_N,s_1,\cdots,s_m.$  Suppose $\sigma:\mathbb{R}\to \mathbb{R}$ is a bijection on $\mathbb{R}$ such the algebras $\mathcal{A}$ and $\mathcal{A}_S$ are both invariant under $\sigma.$ Then by Lemma \ref{lem2TRS},
 $
 \sigma\left(\cup_{i=1}^r I_{\alpha_i}\right)=\cup_{i=1}^r I_{\alpha_i}.
 $
 However, we have the following Lemma that gives the connection between the number of jump points that can be added into intervals that belong to one cycle.
 \begin{lemma}\label{lem3TRS}
 Suppose $p$ jump points are added into an interval $I_{\alpha}$ and $q$ jump points are added into an interval $I_{\beta}.$ If $\sigma :\mathbb{R}\to \mathbb{R}$ is a bijection on $\mathbb{R}$ such that $\mathcal{A}_S$ is invariant under $\sigma$ and $\sigma(I_{\alpha})=I_{\beta},$ then $p=q.$
 \end{lemma}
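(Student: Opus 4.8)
The plan is to show that $\sigma$ restricts to a bijection of $I_{\alpha}$ onto $I_{\beta}$ which carries the jump points added inside $I_{\alpha}$ exactly onto those added inside $I_{\beta}$, so that the two sets of added points have the same cardinality, namely $p=q$. The whole argument rests on the observation that a bijection leaving $\mathcal{A}_S$ invariant must permute the partition sets associated with $\mathcal{A}_S$ and, in particular, send jump points of $\mathcal{A}_S$ to jump points in \emph{both} directions.

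First I would record the structural consequence of invariance. Since $\mathcal{A}_S$ is itself an algebra of piecewise constant functions (now with at most $N+m$ jumps), Lemma~\ref{lem1TRS} applies to it directly: from $h\circ\sigma\in\mathcal{A}_S$ for every $h\in\mathcal{A}_S$ one deduces that each $h\in\mathcal{A}_S$ is constant on $\sigma(J)$ for every partition set $J$ of $\mathcal{A}_S$, which forces $\sigma(J)$ to lie inside a single partition set. Because $\sigma$ is a bijection of $\mathbb{R}$ and the partition sets are finite in number and cover $\mathbb{R}$, each such inclusion is in fact an equality, so $\sigma$ permutes the partition sets of $\mathcal{A}_S$. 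As $\sigma$ preserves cardinality it sends singleton sets to singleton sets, i.e. it maps the jump points of $\mathcal{A}_S$ bijectively onto the jump points of $\mathcal{A}_S$; the same therefore holds for $\sigma^{-1}$. Hence, for every $x\in\mathbb{R}$, the point $x$ is a jump point of $\mathcal{A}_S$ \emph{if and only if} $\sigma(x)$ is.

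Next I would identify the relevant jump points and conclude. Since $I_{\alpha}=(t_{\alpha},t_{\alpha+1})$ is an open interval of the original ($\mathcal{A}$-)partition, it contains none of $t_1,\dots,t_N$ in its interior, so the jump points of $\mathcal{A}_S$ lying in $I_{\alpha}$ are precisely the $p$ added points $s_i\in I_{\alpha}$; likewise the jump points of $\mathcal{A}_S$ lying in $I_{\beta}$ are exactly the $q$ added points there. Because $\sigma(I_{\alpha})=I_{\beta}$ and $\sigma$ is a global bijection, the restriction $\sigma|_{I_{\alpha}}\colon I_{\alpha}\to I_{\beta}$ is a bijection, and by the equivalence of the previous step it restricts to a bijection between the jump points of $\mathcal{A}_S$ in $I_{\alpha}$ and those in $I_{\beta}$. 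Comparing cardinalities yields $p=q$.

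The step most in need of care is the first one: extracting, from the single hypothesis that $\mathcal{A}_S$ is invariant under $\sigma$, that $\sigma$ sends jump points to jump points in both directions. A one-sided set containment of the form $\sigma(J)\subseteq(\text{a single partition set})$ only produces an \emph{injection} of the $p$ jump points into the $q$ jump points, giving $p\le q$; upgrading this to an equality requires the permutation argument above (equivalently, invoking invariance under $\sigma^{-1}$ as well) so as to obtain the reverse inequality $q\le p$ and close the loop.
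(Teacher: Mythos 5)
Your proof is correct and follows essentially the same route as the paper: invariance of $\mathcal{A}_S$ forces $\sigma$ to permute the partition sets of $\mathcal{A}_S$, hence to carry jump points to jump points, and since $\sigma$ maps $I_{\alpha}$ bijectively onto $I_{\beta}$ the added jump points in the two intervals are in bijection, giving $p=q$. You are merely more explicit than the paper (which delegates the ``jump points go to jump points'' step to a citation of its invariance lemma), in particular in deriving the two-sided correspondence from the finiteness of the partition rather than leaving it implicit.
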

 \begin{proof}
 If there are $p$ jump points in $I_{\alpha}$ and $q$ jump points in $I_{\beta}$ and $\sigma:\mathbb{R}\to \mathbb{R}$ is a bijection on $\mathbb{R}$ such that $\mathcal{A}_S$ is invariant under $\sigma,$ then by Lemma \ref{invariant1}, $\sigma$ maps jump points in $I_{\alpha}$ to jump points in $I_{\beta}.$ Since $\sigma$ is a bijection, then the number of jump points in $I_{\alpha}$ must be equal to the number of jump points in $I_{\beta}.$  Therefore, $p=q.$
 \end{proof}
 \subsection{A comparison of the commutants}
 Let\begin{align*}
C_k& :=\left\{x\in \mathbb{R}~|~k\mbox{ is the smallest positive integer such that  } \exists\ I_{\alpha},\text{ such that } x,\sigma^k(x)\in I_{\alpha}\right.\nonumber \\
&\qquad \left. {}\text{ for some }\alpha=0,1,\cdots,2N\right\}.
\end{align*}
Observe that such $C_k$ consist of intervals, say $I_{\alpha_1},\cdots,I_{\alpha_k}$ that are mapped cyclically onto each other. Lemma \ref{lem3TRS} says that if we add $p$ jump points into one of these intervals, then we should add $p$ jump points into each of these intervals. Also, note that since we are adding $p$ jump points, each of the intervals $I_{\alpha_i},\ i=1,\cdots,k$ will be subdivided into $2p+1$ new subintervals of the form $I_{\alpha_i}^j,$ where
$I_{\alpha_i}^j=(s_{j-1}^i,s_j^i),\ j=1,\cdots, p+1$ with $s_0^i=t_{\alpha_i},\ s_{p+1}^i=t_{\alpha_i+1},\ i=1,\cdots,k$ and $\ I_{\alpha_{i}}^{p+j}=\{s_j^i\},\ j=1\cdots, p$. Also, let
\begin{align*}
\tilde{C}_k& :=\left\{x\in I_{\alpha_i}~|~k\mbox{ is the smallest positive integer such that  } \exists\ I_{\alpha_i}^j,\text{ such that } x,\sigma^k(x)\in I_{\alpha_i}^j\right.\nonumber \\
&\qquad \left. {}\text{ for some }i=0,\cdots,2p,\ j=1,\cdots,p\right\}.
\end{align*}

\begin{lemma}\label{lem4TRS}
Let $x\in I_{\alpha_i}$ where $I_{\alpha_i}\subset C_k$. Suppose we add $p$ jump points as described above. Then $x\in I_{\alpha_i}^j\subset \tilde{C}_{kl}$ for some $l\in \{1,2,\cdots,p+1\}.$
\end{lemma}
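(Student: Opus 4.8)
The plan is to exploit the two nested layers of cyclic structure that $\sigma$ imposes: the coarse cycle of length $k$ formed by the intervals making up $C_k$, and a finer permutation of the refined subintervals induced by $\sigma^k$. First I would record what the membership $x\in I_{\alpha_i}\subset C_k$ encodes. As noted after \eqref{eqn7TRS}, $C_k$ is a union of intervals $I_{\alpha_1},\dots,I_{\alpha_k}$ cyclically permuted by $\sigma$, so $\sigma(I_{\alpha_1})=I_{\alpha_2},\dots,\sigma(I_{\alpha_k})=I_{\alpha_1}$, the restriction $\sigma^k$ maps $I_{\alpha_i}$ bijectively onto itself, and $k$ is the least positive integer for which $\sigma^m(x)$ returns to the coarse interval $I_{\alpha_i}$. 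Since $\mathcal{A}_S$ is invariant under $\sigma$ and $\sigma^{-1}$, Lemma \ref{lem1TRS} applied to the refined partition shows that $\sigma$ maps each refined open subinterval bijectively onto a refined open subinterval and each new jump point onto a new jump point. Consequently $\tau:=\sigma^k|_{I_{\alpha_i}}$ permutes the $p+1$ open subintervals $I_{\alpha_i}^1,\dots,I_{\alpha_i}^{p+1}$ of $I_{\alpha_i}$ among themselves, and likewise permutes the $p$ added jump points among themselves.

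Next I would pin down the minimal return time of $x$ to its own refined subinterval. Let $I_{\alpha_i}^j$ be the refined subinterval containing $x$. If $\sigma^m(x)\in I_{\alpha_i}^j$ for some $m>0$, then in particular $\sigma^m(x)\in I_{\alpha_i}$, so $m$ must be a multiple of $k$ by the coarse cycle structure; write $m=kl$. Conversely, $\sigma^{kl}(x)=\tau^l(x)$ lies in $I_{\alpha_i}^j$ precisely when $\tau^l$ sends the whole subinterval $I_{\alpha_i}^j$ back to itself, i.e.\ when $l$ is a multiple of the length $l_0$ of the $\tau$-cycle through $I_{\alpha_i}^j$. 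Hence the least positive integer $m$ with $\sigma^m(x)$ in the same refined subinterval as $x$ equals $kl_0$, and because $\tau$ permutes the $p+1$ open subintervals among themselves (and the $p$ jump points among themselves), the cycle length satisfies $l_0\le p+1$. This shows $x\in\tilde{C}_{kl_0}$ with $l_0\in\{1,\dots,p+1\}$.

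Finally I would upgrade the statement from the single point $x$ to the whole subinterval $I_{\alpha_i}^j$. Because $\sigma$ carries refined subintervals bijectively onto refined subintervals, every point $y\in I_{\alpha_i}^j$ visits, under iteration of $\sigma$, exactly the same sequence of refined subintervals as $x$ does; in particular $y$ has the same minimal return time $kl_0$ to $I_{\alpha_i}^j$. Therefore $I_{\alpha_i}^j\subseteq\tilde{C}_{kl_0}$, which is the desired conclusion with $l=l_0$.

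The step I expect to be the main obstacle is the clean separation of the two cyclic scales in the second paragraph: establishing that the fine return time is forced to be an exact multiple $kl$ of the coarse period and that the multiplier $l$ is governed by a permutation of at most $p+1$ objects. One must be careful that the argument runs uniformly over the open subintervals and the zero-length jump-point subintervals, and that the minimality of $k$ for the coarse partition is genuinely inherited, so that no smaller return time can slip in between consecutive multiples of $k$.
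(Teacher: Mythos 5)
Your proof is correct and follows essentially the same route as the paper's: both arguments rest on the observation that $\sigma$ permutes the refined subintervals, hence $\sigma^k$ restricted to $I_{\alpha_i}$ permutes the $p+1$ open subintervals (and the $p$ jump points) among themselves, so each lies in a cycle of length $l\leq p+1$ and therefore in $\tilde{C}_{kl}$. Your write-up is somewhat more careful than the paper's, in that you explicitly justify why the fine return time must be an exact multiple $kl$ of the coarse period $k$ and why all points of a given subinterval share the same return time, steps the paper leaves implicit.
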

\begin{proof}
By invariance of $\mathcal{A}$ under $\sigma,$ we have that $\sigma$ maps the intervals $I_{\alpha_i},\ i=1,\cdots,k$ bijectively onto each other and since we are adding $p$ jump points into each interval $I_{\alpha_i}$, then each of these intervals is subdivided into $2p+1$ subintervals as described before. By invariance of $\mathcal{A}_S$ under $\sigma,$ we have that $\sigma$ maps each of the jump points $s_j^i\in I_{\alpha_i}$ onto another jump point. Since $\sigma^k(I_{\alpha_i})=I_{\alpha_i}$ for each $i=1,\cdots,k$, then each jump point belongs to $\tilde{C}_{kl}$ for some $l\in \{1,2,\cdots,p\}.$\par
Now consider the subintervals $I_{\alpha_i}^j=(s_{j-1}^i,s_j^i),\ i=1,2,\cdots,p+1.$ We know that each $I_{\alpha_i}$ is divided into $p+1$ intervals of this form. Furthermore, invariance of $\mathcal{A}_S$ under $\sigma$ implies that $\sigma$ maps each of these intervals bijectively onto each other. Since $\sigma^k(I_{\alpha_i})=I_{\alpha_i},$ then each $I_{\alpha_i}^j\subset \tilde{C}_{kl}$ for some $l\in \{1,2,\cdots,p+1\}.$
\end{proof}
From Lemma \ref{lem4TRS}, it can be seen that for those $C_k$ which contain intervals where we add $p$ jump points,
\begin{equation}\label{eqn11TRS}
C_k=\bigcup_{1\leqslant l\leqslant p+1}\tilde{C}_{kl}.
\end{equation}
Using this and the fact that for any integers $k,l,n$ with $k\neq 0,$ $kl\mid n$ if and only if $l\mid {n\over{k}},$ we give the description of $Sep_{\mathcal{A}_S}^n(\mathbb{R})$ and the commutant in the following theorem, whose proof is a direct consequence of Lemma \ref{lem4TRS} and equations \eqref{eqn6TRS} and \eqref{eqn9TRS}.
\begin{theorem}
Suppose we add $p$ jump points into each of the intervals in $C_k.$ Then
\begin{equation}\label{eqn12TRS}
Sep_{\mathcal{A}_S}^n(\mathbb{R})=\begin{cases} Sep_{\mathcal{A}}^n(\mathbb{R})& \text{ if } k\nmid n\\
Sep_{\mathcal{A}}^n(\mathbb{R})\bigcup\left(\bigcup\limits_{l=1\ :\ l\nmid{n\over k}}^{p+1}\tilde{C}_{kl}\right)& \text{ if } k\mid n
\end{cases}
\end{equation}
and the set difference of the commutants is given by
\begin{equation}\label{eqn13TRS}
\mathcal{A}_S'=\mathcal{A}'\setminus \left\{\sum_{n\in \mathbb{Z}}f_n\delta^n\ :\ f_n\neq 0 \text{ on }\tilde{C}_{kl}\text{ for some $l$ such that } l\nmid{n\over k}\right\}.
\end{equation}
\end{theorem}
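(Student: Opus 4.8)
The plan is to obtain both displayed formulas by applying Proposition \ref{Propn1TRS} and Theorem \ref{thm2TRS} directly to the refined algebra $\mathcal{A}_S$, and then to match the resulting refined separation classes against the original ones. Write $D_m$ for the analogue of $C_k$ computed with respect to the refined partition of $\mathcal{A}_S$, so that Proposition \ref{Propn1TRS} gives $Sep_{\mathcal{A}_S}^n(\mathbb{R})=\bigcup_{m\nmid n}D_m$. The first step is to describe each $D_m$ in terms of the original data. Since jump points are added only into the intervals constituting $C_k$, every point outside $C_k$ keeps its original separation period: for $x\notin C_k$ the intervals containing $x$ and its iterates are not subdivided, so $x\in D_m$ if and only if $x\in C_m$. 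For $x\in C_k$, Lemma \ref{lem4TRS} shows that the refined period has the form $kl$ with $1\le l\le p+1$ and that such $x$ lands in $\tilde{C}_{kl}$. Consequently $D_m\cap C_k=\tilde{C}_{kl}$ when $m=kl$ and is empty otherwise, while $D_m\setminus C_k=C_m\setminus C_k$.

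Second, I would substitute this description into $\bigcup_{m\nmid n}D_m$ and split the union into the part lying in $\mathbb{R}\setminus C_k$ and the part lying in $C_k$. The first part equals $\big(\bigcup_{m\nmid n}C_m\big)\setminus C_k=Sep_{\mathcal{A}}^n(\mathbb{R})\setminus C_k$ by \eqref{eqn6TRS}, and the second equals $\bigcup_{l:\,kl\nmid n}\tilde{C}_{kl}$. Here the arithmetic identity $kl\mid n\iff l\mid n/k$ (valid for $k\neq 0$), together with the observation that $k\nmid n$ forces $kl\nmid n$ for every $l\ge 1$, splits the computation into two cases. If $k\nmid n$, then every $\tilde{C}_{kl}$ survives, their union is $C_k$ by \eqref{eqn11TRS}, and since $C_k\subseteq Sep_{\mathcal{A}}^n(\mathbb{R})$ in this case, the two parts recombine to give $Sep_{\mathcal{A}_S}^n(\mathbb{R})=Sep_{\mathcal{A}}^n(\mathbb{R})$. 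If $k\mid n$, then $C_k$ is disjoint from $Sep_{\mathcal{A}}^n(\mathbb{R})$, so the first part is all of $Sep_{\mathcal{A}}^n(\mathbb{R})$ and the surviving second part is exactly $\bigcup_{l:\,l\nmid n/k}\tilde{C}_{kl}$; this produces the second line of \eqref{eqn12TRS}.

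Third, for the commutant I would feed \eqref{eqn12TRS} into the description \eqref{eqn9TRS} applied to both algebras, recalling that here $\mathcal{A}'$ denotes the commutant taken inside $\mathcal{A}_S\rtimes_{\tilde{\sigma}}\mathbb{Z}$, so that in both $\mathcal{A}'$ and $\mathcal{A}_S'$ the coefficients $f_n$ range over $\mathcal{A}_S$ and only the vanishing condition differs. Since $Sep_{\mathcal{A}}^n(\mathbb{R})\subseteq Sep_{\mathcal{A}_S}^n(\mathbb{R})$ we have $\mathcal{A}_S'\subseteq\mathcal{A}'$, and an element $\sum_n f_n\delta^n\in\mathcal{A}'$ fails to lie in $\mathcal{A}_S'$ precisely when some $f_n$ is nonzero somewhere on $Sep_{\mathcal{A}_S}^n(\mathbb{R})\setminus Sep_{\mathcal{A}}^n(\mathbb{R})$. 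By \eqref{eqn12TRS} this set difference is empty unless $k\mid n$, in which case it equals $\bigcup_{l:\,l\nmid n/k}\tilde{C}_{kl}$; translating this into the condition that $f_n\not\equiv 0$ on some $\tilde{C}_{kl}$ with $l\nmid n/k$ gives exactly the set removed in \eqref{eqn13TRS}, and hence $\mathcal{A}_S'=\mathcal{A}'\setminus\{\ldots\}$.

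The routine parts are the set-algebra manipulations and the divisibility bookkeeping. The step that needs the most care — and really the crux of the argument — is the clean decomposition of the refined separation classes $D_m$ in the first paragraph: one must verify that subdividing only the intervals of $C_k$ leaves the separation period of every point outside $C_k$ untouched, and that Lemma \ref{lem4TRS} accounts for all points of $C_k$ with no overlap, so that the $\tilde{C}_{kl}$ together with the unrefined $C_m$ genuinely partition $\mathbb{R}$.
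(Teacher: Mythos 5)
Your proposal is correct and follows essentially the same route as the paper, which states only that the theorem is ``a direct consequence of Lemma~\ref{lem4TRS} and equations \eqref{eqn6TRS} and \eqref{eqn9TRS}'': you decompose the refined separation classes via Lemma~\ref{lem4TRS} and \eqref{eqn11TRS}, use the identity $kl\mid n\iff l\mid \frac{n}{k}$, and substitute into the commutant description, which is exactly the intended argument. Your explicit handling of the points outside $C_k$ and of the convention that $\mathcal{A}'$ is taken inside $\mathcal{A}_S\rtimes_{\tilde{\sigma}}\mathbb{Z}$ simply fills in details the paper leaves implicit.
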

From \eqref{eqn11TRS}, it can be seen that those $C_k$ which contain intervals can be decomposed as a union of $\tilde{C}_{kl}.$ In the next Theorem, we state a necessary condition for subintervals of a given  interval $I_{\alpha_i}\subset C_k$  to belong to $\tilde{C}_{kl}$.
\begin{theorem}\label{thm4TRS}
For each $l\in \{1,2,\cdots,p+1\}$ let $\pi(l)$ denote the number of subintervals (of an interval $I_{\alpha_i}\subset C_k$) that belong to $\tilde{C}_{kl}.$ Then
\begin{enumerate}
\item $l$ divides $\pi(l)$ for all $l$ and
\item $\sum\limits_{l=1}^{p+1}\pi(l)=p+1.$
\end{enumerate}
If the two above conditions are satisfied then $\pi$ counts the number of subintervals in $\tilde{C}_{kl}$ for some $\sigma$ and some choice of $p$ jump points.
\end{theorem}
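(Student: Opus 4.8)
The plan is to translate the statement into the combinatorics of a single permutation and then invoke the classification of permutations by cycle type. First I would note that, since $\sigma$ permutes the intervals $I_{\alpha_1},\dots,I_{\alpha_k}$ making up $C_k$ cyclically, the map $\sigma^k$ fixes each $I_{\alpha_i}$ setwise; invariance of $\mathcal{A}_S$ under $\sigma$ forces $\sigma^k$ to carry open subintervals to open subintervals, so it induces a permutation $\rho_i$ of the $p+1$ open subintervals of $I_{\alpha_i}$. Because $\sigma$ itself restricts to a bijection from the open subintervals of $I_{\alpha_i}$ onto those of $I_{\alpha_{i+1}}$ and intertwines $\sigma^k|_{I_{\alpha_i}}$ with $\sigma^k|_{I_{\alpha_{i+1}}}$, the $\rho_i$ are pairwise conjugate and hence share a common cycle type. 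I may therefore fix $I_{\alpha_1}$ and set $\rho:=\rho_1$, which also shows that $\pi$ is independent of the chosen interval.

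Next I would pin down membership in $\tilde{C}_{kl}$. Fix $x\in I_{\alpha_1}^{\,j}$. Because $\sigma$ cycles $I_{\alpha_1},\dots,I_{\alpha_k}$ with period $k$, the point $\sigma^M(x)$ returns to $I_{\alpha_1}$ exactly when $k\mid M$; and for $M=kl$ one has $\sigma^{kl}(x)\in\rho^{\,l}(I_{\alpha_1}^{\,j})$. Hence the smallest $M$ for which $x$ and $\sigma^M(x)$ lie in a common subinterval equals $k$ times the length of the $\rho$-cycle through $j$. Consequently the whole subinterval $I_{\alpha_1}^{\,j}$ lies in $\tilde{C}_{kl}$ precisely when $j$ sits in an $l$-cycle of $\rho$, so that $\pi(l)=l\cdot c_l$, where $c_l$ denotes the number of $l$-cycles of $\rho$. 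From this formula both assertions are immediate: $l\mid\pi(l)$ since $\pi(l)=l\,c_l$, and $\sum_{l=1}^{p+1}\pi(l)$ counts each of the $p+1$ subintervals of $I_{\alpha_1}$ exactly once, giving $\sum_{l}\pi(l)=p+1$.

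For the converse I would reverse this correspondence. Given $\pi$ with $l\mid\pi(l)$ and $\sum_{l}\pi(l)=p+1$, set $c_l:=\pi(l)/l\in\mathbb{Z}_{\geq 0}$; then $\sum_{l} l\,c_l=p+1$, which is exactly the condition for a permutation of $p+1$ symbols to possess $c_l$ cycles of length $l$ for every $l$. Fix such a permutation $\rho$ and realize it geometrically: place $p$ jump points in each of the cyclically arranged intervals $I_{\alpha_1},\dots,I_{\alpha_k}$, define $\sigma$ to carry $I_{\alpha_i}^{\,j}$ onto $I_{\alpha_{i+1}}^{\,j}$ (indices taken mod $k$) for $i<k$ and to carry $I_{\alpha_k}^{\,j}$ onto $I_{\alpha_1}^{\rho(j)}$, extend it to the jump points by any bijection respecting the same scheme, and let $\sigma$ be the identity on the remainder of $\mathbb{R}$. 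The crucial point is that $\sigma$ is required only to be a set bijection preserving the partition of $\mathcal{A}_S$, not to be monotone, so an arbitrary $\rho$ is attainable; by construction $\sigma^k|_{I_{\alpha_1}}$ induces $\rho$, and the first part then returns exactly the prescribed $\pi$.

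The conceptual core is the reduction in the first two paragraphs: once $\tilde{C}_{kl}$ is identified with the $l$-cycles of $\rho$, conditions (1) and (2) are forced, and the realizability is the standard fact that every cycle type with the correct total is attained by some permutation. I expect the main obstacle to be the bookkeeping in the converse, namely verifying that the piecewise prescription for $\sigma$ glues into a genuine bijection of $\mathbb{R}$ that leaves both $\mathcal{A}$ and $\mathcal{A}_S$ invariant (in particular that it maps the original jump points $t_k$ among themselves and the added points $s_i$ among themselves); a secondary point to state explicitly is that $\pi$ counts the $p+1$ open subintervals rather than all $2p+1$ pieces, which is what makes the total equal $p+1$.
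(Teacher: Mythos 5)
Your argument for parts (1) and (2) is correct and is in essence the paper's own argument, made more precise: the paper observes that $\tilde{C}_{kl}$ consists of $\sigma$-cycles of subintervals of length $kl$ that are distributed equally among the $k$ intervals of $C_k$, hence contribute $l$ subintervals to each $I_{\alpha_i}$, giving $l\mid\pi(l)$; your reformulation via the induced permutation $\rho=\sigma^k|_{I_{\alpha_1}}$ and the identity $\pi(l)=l\,c_l$ is the same idea, with the added benefit of justifying cleanly why the minimal return time of a point of $I_{\alpha_1}^{\,j}$ is exactly $k$ times the $\rho$-cycle length of $j$ (the paper's ``if and only if $\sigma^{kl}(I_{\alpha_i}^j)=I_{\alpha_i}^j$'' glosses over minimality) and why $\pi$ does not depend on which $I_{\alpha_i}$ one looks at. The substantive difference is the converse: the paper's proof stops after establishing (1) and (2) and never argues the realizability claim in the last sentence of the theorem, whereas you supply it by choosing a permutation of $\{1,\dots,p+1\}$ with cycle counts $c_l=\pi(l)/l$ and realizing it by an explicit (not necessarily monotone) bijection of $\mathbb{R}$ that cycles the $I_{\alpha_i}$ and twists by $\rho$ on the last step. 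That construction is sound -- the gluing, the treatment of the added jump points, and the invariance of both $\mathcal{A}$ and $\mathcal{A}_S$ all check out, and your closing remark that $\pi$ counts only the $p+1$ open subintervals (not the $2p+1$ pieces) matches the paper's convention -- so your write-up actually covers more of the stated theorem than the published proof does.
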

\begin{proof}
Recall that, since we are adding $p$ jump points into each of the intervals $I_{\alpha_i},i=1,\cdots,k,$ each of these intervals is subdivided into $p+1$ subintervals (excluding the jump points). Therefore, from the definition of $\pi(l),$ $$\sum\limits_{l=1}^{p+1}\pi(l)=p+1.$$ Observe that an interval $I_{\alpha_i}^j\subset \tilde{C}_{kl}$ if and only if $\sigma^{kl}(I_{\alpha_i}^j)=I_{\alpha_i}^j.$ This means that there are $kl-1$ other intervals which, together with $I_{\alpha_i}^j,$ are permuted by $\sigma.$ That is, $\tilde{C}_{kl}$ contains cycles of subintervals (can be more than one cycle), of length $kl$ that are equally distributed into the intervals $I_{\alpha_i},i=1,\cdots,k.$ Therefore, $l\mid \pi(l).$
\end{proof}
\section{An example with two jump points added}
Suppose two jump points are added. Then these are either added into the same interval $I_{\alpha_0},$ say, or  they are added into two different intervals, say, $I_{\alpha_1}$ and $I_{\alpha_2}.$ We treat the two cases below.
\subsection{Jump points added into the same interval}
Suppose the two jump points are added into the same interval, say, $I_{\alpha_0}.$ Then this interval will be partitioned into three new subintervals which, together with the jump points, yields a new partition as follows.  $I_{\alpha_0}^1=(t_{\alpha_0},s_1),\ I_{\alpha_0}^2=(s_1,s_2),\ I_{\alpha_0}^3=(s_2,t_{\alpha_0+1}),\ I_{\alpha_0}^4=\{s_1\}$ and $I_{\alpha_0}^5=\{s_2\}.$
From Lemma \ref{lem2TRS}, we have that $\mathcal{A}_S$ is invariant under a bijection $\sigma:\mathbb{R}\to \mathbb{R}$ if $\sigma(I_{\alpha_0})=I_{\alpha_0}.$ Therefore $I_{\alpha_0}\not\subset Sep_{\mathcal{A}}^n(\mathbb{R})$ for any $n\in \mathbb{Z}.$
Let \begin{align}\label{eqn14TRS}
C_k& :=\left\{x\in \mathbb{R}~|~k\mbox{ is the smallest positive integer such that  } \exists\ I_{\alpha},\text{ such that } x,\sigma^k(x)\in I_{\alpha}\right.\nonumber \\
&\qquad \left. {}\text{ for some }\alpha=0,1,\cdots,2N\right\}.
\end{align}
and let
\begin{align}\label{eqn15TRS}
\tilde{C}_k& :=\left\{x\in I_{\alpha_0}~|~k\mbox{ is the smallest positive integer such that  } \exists\ I_{\alpha_0}^j,\text{ such that } x,\sigma^k(x)\in I_{\alpha_0}^j\right.\nonumber \\
&\qquad \left. {}\text{ for some }j=1,\cdots,5\right\}.
\end{align}
Then it is easily seen that, for every $n\in \mathbb{Z}$,
$$
Sep_{\mathcal{A}_S}^n(\mathbb{R})=Sep_{\mathcal{A}}^n(\mathbb{R})\bigcup\left(\cup_{k\nmid n}\tilde{C}_k\right).
$$
We treat the different cases below.
\subsubsection{$\sigma(I_{\alpha_0}^j)=I_{\alpha_0}^j$ for all $j=1,2,\cdots,5$}\label{case1TRS}
In this case, $I_{\alpha_0}^j\subset \tilde{C}_1$ for all $j=1,2,\cdots,5$ and hence  $I_{\alpha_0}^j\not\subset Sep_{\mathcal{A}_S}^n(\mathbb{R})$ for any $n\in \mathbb{Z.}$ Therefore, for any $n\in \mathbb{Z},$
$$
Sep_{\mathcal{A}_S}^n(\mathbb{R})=Sep_{\mathcal{A}}^n(\mathbb{R})
$$ and hence,
$$\mathcal{A}_S'=\mathcal{A}'$$
\subsubsection{$\sigma(I_{\alpha_0}^1)=I_{\alpha_0}^2,\ \sigma(I_{\alpha_0}^2)=I_{\alpha_0}^1$ and $\sigma(I_{\alpha_0}^j)=I_{\alpha_0}^j,\ j=3,4,5$}
In this case; $I_{\alpha_0}^1,I_{\alpha_0}^2\subset \tilde{C}_2$ and $I_{\alpha_0}^3,I_{\alpha_0}^4,I_{\alpha_0}^5\subset \tilde{C}_1.$  Therefore $I_{\alpha_0}^1,I_{\alpha_0}^2\subset Sep_{\mathcal{A}_S}^n(\mathbb{R})$ for every odd $n\in \mathbb{Z}.$ It should be noted that $\sigma(I_{\alpha_0})=I_{\alpha_0}$, therefore $I_{\alpha_0}\not\subset Sep_{\mathcal{A}}^n(\mathbb{R})$ for every $n\in \mathbb{Z}.$ We deduce that, for every $n\in \mathbb{Z},$
$$
Sep_{\mathcal{A}_S}^n(\mathbb{R})=\begin{cases} Sep_{\mathcal{A}}^n(\mathbb{R})& \text{ if $n$ is even }\\
Sep_{\mathcal{A}}^n(\mathbb{R})\bigcup \left(I_{\alpha_0}^1\cup I_{\alpha_0}^2\right)& \text{ if $n$ is odd.} \end{cases}
$$
Therefore, the commutant $\mathcal{A}_S'$ is given by;
$$
\mathcal{A}_S'=\mathcal{A}'\setminus \left\{\sum_{n\in \mathbb{Z}}f_n\delta^n\ |\ _{2n+1}\neq 0 \text{ on }I_{\alpha_0}^1\cup I_{\alpha_0}^2\right\}.
$$
The following cases produce similar results.
\begin{enumerate}
\item $\sigma(I_{\alpha_0}^1)=I_{\alpha_0}^3,\ \sigma(I_{\alpha_0}^3)=I_{\alpha_0}^1$ and $\sigma(I_{\alpha_0}^j)=I_{\alpha_0}^j$ for $j=2,4,5.$
\item $\sigma(I_{\alpha_0}^2)=I_{\alpha_0}^3,\ \sigma(I_{\alpha_0}^3)=I_{\alpha_0}^2$ and $\sigma(I_{\alpha_0}^j)=I_{\alpha_0}^j$ for $j=1,4,5.$
\item $\sigma(I_{\alpha_0}^4)=I_{\alpha_0}^5,\ \sigma(I_{\alpha_0}^5)=I_{\alpha_0}^4$ and $\sigma(I_{\alpha_0}^j)=I_{\alpha_0}^j$ for $j=1,2,3.$
\end{enumerate}
\subsubsection{$\sigma(I_{\alpha_0}^1)=I_{\alpha_0}^2,\ \sigma(I_{\alpha_0}^2)=I_{\alpha_0}^3,\ \sigma(I_{\alpha
_0}^3)=I_{\alpha_0}^1$ and $\sigma(I_{\alpha_0}^j)=I_{\alpha_0}^j$ for $j=4,5$}
In this case, $I_{\alpha_0}^4,I_{\alpha_0}^5\subset \tilde{C}_1$ and $I_{\alpha_0}^1,I_{\alpha_0}^2,I_{\alpha_0}^3\subset \tilde{C}_3.$ Therefore $I_{\alpha_0}^1,I_{\alpha_0}^2,I_{\alpha_0}^3\subset Sep_{\mathcal{A}_S}^n(\mathbb{R})$ for any $n\in \mathbb{Z}$ such that $3\nmid n.$ Therefore for every $n\in \mathbb{Z}$,
$$
Sep_{\mathcal{A}_S}^n(\mathbb{R})=\begin{cases} Sep_{\mathcal{A}}^n(\mathbb{R})& \text{ if $3\mid n$ }\\
Sep_{\mathcal{A}}^n(\mathbb{R})\bigcup \left(I_{\alpha_0}^1\cup I_{\alpha_0}^2\cup{I_{\alpha_0}^3}\right)& \text{ if $3\nmid n$}
\end{cases}
$$
and hence,
$$
\mathcal{A}_S'=\mathcal{A}'\setminus \left\{\sum_{n\in \mathbb{Z}}f_n\delta^n\ |\ _{n}\neq 0 \text{ on }I_{\alpha_0}^1\cup I_{\alpha_0}^2\cup I_{\alpha_0}^3 \text{ for all $n$ such that }3\nmid n\right\}.
$$
\subsubsection{$\sigma(I_{\alpha_0}^1)=I_{\alpha_0}^2,\ \sigma(I_{\alpha_0}^2)=I_{\alpha_0}^3,\ \sigma(I_{\alpha
_0}^3)=I_{\alpha_0}^1,\ \sigma(I_{\alpha_0}^4)=I_{\alpha_0}^5$ and $\sigma(I_{\alpha_0}^5)=I_{\alpha_0}^4$}
In this case, subintervals together with jump points are mapped cyclically by $\sigma.$\\ Therefore $I_{\alpha_0}^4,I_{\alpha_0}^5\subset \tilde{C}_2$ and $I_{\alpha_0}^1,I_{\alpha_0}^2,I_{\alpha_0}^3\subset \tilde{C}_3.$ It follows that $I_{\alpha_0}^1,I_{\alpha_0}^2,I_{\alpha_0}^3\subset Sep_{\mathcal{A}_S}^n(\mathbb{R})$ for any $n\in \mathbb{Z}$ such that $3\nmid n$ and $I_{\alpha_0}^4,I_{\alpha_0}^5\subset Sep_{\mathcal{A}_S}^n(\mathbb{R})$ for every odd $n\in \mathbb{Z}.$ Therefore for every $n\in \mathbb{Z}$,
$$
Sep_{\mathcal{A}_S}^n(\mathbb{R})=\begin{cases} Sep_{\mathcal{A}}^n(\mathbb{R})\cup\left(I_{\alpha_0}^4\cup I_{\alpha_0}^5\right)& \text{ if $n$ is odd and $3\mid n$ }\\
Sep_{\mathcal{A}}^n(\mathbb{R})\bigcup \left(I_{\alpha_0}^1\cup I_{\alpha_0}^2\cup{I_{\alpha_0}^3}\right)& \text{ if $3\nmid n$ } \\ Sep_{\mathcal{A}}^n(\mathbb{R})& \text{ otherwise}\end{cases}
$$
and hence,
\begin{align*}
\mathcal{A}_S'=\mathcal{A}'\setminus\left( \left\{\sum_{n\in \mathbb{Z}}f_n\delta^n\ |\ _{n}\neq 0 \text{ on }I_{\alpha_0}^1\cup I_{\alpha_0}^2\cup I_{\alpha_0}^3 \text{ for some $n$ such that }3\nmid n\right\}\bigcup\right. \\
 \left. {} \left\{\sum_{n\in \mathbb{Z}}f_n\delta^n\ |\ f_{2n+1}\neq 0 \text{ on }I_{\alpha_0}^4\cup I_{\alpha_0}^5\right\}\right).
\end{align*}
\subsection{Jump points added into different intervals}
Suppose the jump points are added into two different intervals, says $I_{\alpha_1}=(t_{\alpha_1},t_{\alpha_1+1})$ and $I_{\alpha_2}=(t_{\alpha_2},t_{\alpha_2+1})$, that is $t_{\alpha_1}<s_1<t_{\alpha_1+1}$ and $t_{\alpha_2}<s_2<t_{\alpha_2+1}.$ By Lemma \ref{lem2TRS},
$
\sigma\left(I_{\alpha_1}\cup I_{\alpha_2}\right)=I_{\alpha_1}\cup I_{\alpha_2}.
$
Suppose each of the intervals $I_{\alpha_i}$ is subdivided into subintervals $I_{\alpha_i}^j,\ j=1,2,3,$ where $I_{\alpha_i}^1=(t_{\alpha_i},s_i),\ I_{\alpha_i}^2=(s_i,t_{\alpha_i+1})$ and $I_{\alpha_i}^3=\{s_i\}.$ Again, let
\begin{align}\label{eqn16TRS}
C_k& :=\left\{x\in \mathbb{R}~|~k\mbox{ is the smallest positive integer such that  } \exists\ I_{\alpha},\text{ such that } x,\sigma^k(x)\in I_{\alpha}\right.\nonumber \\
&\qquad \left. {}\text{ for some }\alpha=0,1,\cdots,2N\right\}
\end{align}
and let
\begin{align}\label{eqn17TRS}
\tilde{C}_k& :=\left\{x\in I_{\alpha_i}~|~k\mbox{ is the smallest positive integer such that  } \exists\ I_{\alpha_i}^j,\text{ such that } x,\sigma^k(x)\in I_{\alpha_i}^j\right.\nonumber \\
&\qquad \left. {}\text{ for some }j=1,\cdots,3\right\}.
\end{align}
Then, again it can easily be seen that, for every $n\in \mathbb{Z},$
$$
Sep_{\mathcal{A}_S}^n(\mathbb{R})=Sep_{\mathcal{A}}^n(\mathbb{R})\bigcup\left(\cup_{k\nmid n}\tilde{C}_k\right)
$$
In this case we have two important scenarios.
\begin{enumerate}
\item $\sigma(s_i)=s_i$ for all $i=1,2$ or
\item $\sigma(s_1)=s_2$ and $\sigma(s_2)=s_1.$
\end{enumerate}
These can be further subdivided into other cases and we treat these in the following subsections.
\subsubsection{$\sigma(s_i)=s_i,\ i=1,2$ and $\sigma(I_{\alpha_i}^j)=I_{\alpha_i}^j$ for all $j=1,2,3$}\label{Case2TRS}
If $\sigma(s_1)=s_1$ and $\sigma(s_2)=s_2,$ then $\sigma(I_{\alpha_1})=I_{\alpha_1}$ and $\sigma(I_{\alpha_2})=I_{\alpha_2}.$ If in addition $\sigma(I_{\alpha_i}^j)=I_{\alpha_i}^j$ for all $i=1,2$ and $j=1,2,3,$ then all the new subintervals belong to $\tilde{C}_1$ and nothing changes in $Sep_{\mathcal{A}}^n(\mathbb{R}).$ That is $Sep_{\mathcal{A}_S}^n(\mathbb{R})=Sep_{\mathcal{A}}^n(\mathbb{R})$ and hence
$$\mathcal{A}_S'=\mathcal{A}'.$$
\subsubsection{$\sigma(s_i)=s_i$ for all $i=1,2$, $\sigma(I_{\alpha_1}^1)=I_{\alpha_1}^2,\ \sigma(I_{\alpha_2}^1)=I_{\alpha_2}^1$}
It can easily be seen that in this case, $\sigma(I_{\alpha_1}^2)=I_{\alpha_1}^1$ and $\sigma(I_{\alpha_2}^2)=I_{\alpha_2}^2.$ Therefore, $I_{\alpha_2}^1,I_{\alpha_2}^2,I_{\alpha_1}^3,I_{\alpha_2}^3\subset \tilde{C}_1$ and $I_{\alpha_1}^1,I_{\alpha_1}^2\subset\tilde{C}_2$, and hence
$$
Sep_{\mathcal{A}_S}^n(\mathbb{R})=\begin{cases}Sep_{\mathcal{A}}^n(\mathbb{R})\bigcup\left(I_{\alpha_1}^1\cup I_{\alpha_1}^2\right)& \text{ if $n$ is odd}\\
Sep_{\mathcal{A}}^n(\mathbb{R})& \text{ if $n$ is even}\end{cases}
$$
and the commutant is given by
$$
\mathcal{A}_S'=\mathcal{A}'\setminus \left\{\sum_{n\in \mathbb{Z}}f_n\delta^n\ |\ f_{2n+1}\neq 0 \text{ on }I_{\alpha_1}^1\cup I_{\alpha_1}^2\right\}
$$
Similar results can be obtained for the following cases.
\begin{enumerate}
\item $\sigma(s_i)=s_i$ for all $i=1,2$, $\sigma(I_{\alpha_1}^1)=I_{\alpha_1}^1\ (\Rightarrow \sigma(I_{\alpha_1}^2)=I_{\alpha_1}^2)$ and $\sigma(I_{\alpha_2}^1)=I_{\alpha_2}^2\ (\Rightarrow \sigma(I_{\alpha_2}^2)=I_{\alpha_2}^1).$
\item $\sigma(s_i)=s_i$ for all $i=1,2$, $\sigma(I_{\alpha_1}^1)=I_{\alpha_1}^2\ (\Rightarrow \sigma(I_{\alpha_1}^2)=I_{\alpha_1}^1)$ and $\sigma(I_{\alpha_2}^1)=I_{\alpha_2}^2\ (\Rightarrow \sigma(I_{\alpha_2}^2)=I_{\alpha_2}^1).$
\end{enumerate}
\subsubsection{$\sigma(s_1)=s_2\ (\Rightarrow \sigma(s_2)=s_1)$}
This is only true if $\sigma(I_{\alpha_1})=I_{\alpha_2}$ (and hence $\sigma(I_{\alpha_2})=I_{\alpha_1}$). This implies that $I_{\alpha_1},I_{\alpha_2}\subset C_2$ and all the new subintervals belong either to $\tilde{C}_2$ or to $\tilde{C}_4$ as can be seen in the two cases below.
\begin{enumerate}
\item $\sigma(I_{\alpha_1}^1)=I_{\alpha_2}^1$ and $\sigma(I_{\alpha_2}^1)=I_{\alpha_1}^1.$\\
This implies that $\sigma(I_{\alpha_1}^2)=I_{\alpha_2}^2$ and $\sigma(I_{\alpha_2}^2)=I_{\alpha_1}^2.$ Therefore all the new subintervals belong to $\tilde{C}_2.$ Therefore $I_{\alpha_i}^j\subset Sep_{\mathcal{A}_S}^n(\mathbb{R})$ for any odd $n$ and for all $i=1,2$ and all $j=1,2,3.$ Since $I_{\alpha_1},I_{\alpha_2}\subset Sep_{\mathcal{A}}^n(\mathbb{R})$ for any off $n\in \mathbb{Z},$ then, for any $n\in \mathbb{Z}$
$$Sep_{\mathcal{A}_S}^n(\mathbb{R})=Sep_{\mathcal{A}}^n(\mathbb{R})$$ and the commutants are the same in this case.
\item $\sigma(I_{\alpha_1}^1)=I_{\alpha_2}^1$, $\sigma(I_{\alpha_2}^2)=I_{\alpha_1}^2,\sigma(I_{\alpha_1}^2)=I_{\alpha_2}^2$ and $\sigma(I_{\alpha_2}^2)=I_{\alpha_1}^1.$ This implies that the new subintervals are mapped cyclically onto each other and hence all of them belong to $\tilde{C}_4.$ Therefore
$$
Sep_{\mathcal{A}_S}^n(\mathbb{R})=\begin{cases}Sep_{\mathcal{A}}^n(\mathbb{R})\bigcup\left(\cup_{i,j}I_{\alpha_i}^j\right)& \text{ if $4\nmid n$ }\\
Sep_{\mathcal{A}}^n(\mathbb{R})& \text{ if $4\mid n$ }\end{cases}
$$
and the commutant is given by
$$
\mathcal{A}_S'=\mathcal{A}'\setminus \left\{\sum_{n\in \mathbb{Z}}f_n\delta^n\ |\ f_{n}\neq 0 \text{ on }\cup_{i,j}I_{\alpha_i}^j \text{ for some $n$ such that }4\nmid n\right\}
$$
\end{enumerate}
\begin{remark}
\end{remark}
From the example of adding two jump points above, it can be seen that quite many cases to consider arise even by adding a small number of jump points. Taking a close look at the example reveals that there are $6$ distinct cases when two jump points are added (cases \ref{case1TRS} and \ref{Case2TRS} are the same).\par
If we let $p(n)$ denote the number of partitions of a positive integer $n$, then  the number of ways of distributing $n$ jump points among intervals is $p(n)$ (assuming sufficiently many intervals).
The case when we add $k$ jump points into a $C_1$ interval in turn gives rise to $p(k)p(k+1)$ sub cases to consider. Clearly, these are too many cases to write down, even for a small number of jump points added.

\section{Comparison of commutants for general sets}
Let $X$ be any set, $J$ a countable set and $\mathbb{P}=\{X_i\ :\  i\in J\}$ a partition of $X,$ that is $X=\bigcup\limits_{r\in J}X_r$ where $X_r\neq \emptyset$ for all $r\in J$ and $X_r\cap X_{r'}=\emptyset$ for $r\neq r'.$ Let $\mathcal{A}$ be the algebra of piecewise constant complex-valued functions and let $\sigma:X\to X$ be a bijection. The following lemma, whose proof can be found in \cite{AlexTRS}, gives the conditions under which $\mathcal{A}$ is invariant under $\sigma$ (and $\sigma^{-1}$).
\begin{lemma}\label{lem5TRS}
The following properties are equivalent.
\begin{enumerate}
\item The algebra $\mathcal{A}$ is invariant under $\sigma$ and $\sigma^{-1}.$
\item For every $i\in J$ there exists  $j\in J$ such that $\sigma(X_i)=X_j.$
\end{enumerate}
\end{lemma}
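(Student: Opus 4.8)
The plan is to reformulate both statements in terms of how $\sigma$ acts on the blocks $X_i$ and on their characteristic functions. Each $\chi_{X_i}$ lies in $\mathcal{A}$, being equal to $1$ on $X_i$ and to $0$ on every other block, hence constant on each block; more generally $\chi_E\in\mathcal A$ if and only if $E$ is \emph{saturated}, i.e. a union of blocks, and $h\in\mathcal A$ if and only if $h$ is constant on every block. The basic computation I would record first is that for any $j\in J$,
\[
\chi_{X_j}\circ\sigma = \chi_{\sigma^{-1}(X_j)}, \qquad \chi_{X_j}\circ\sigma^{-1} = \chi_{\sigma(X_j)},
\]
since $(\chi_{X_j}\circ\sigma)(x)=1$ exactly when $\sigma(x)\in X_j$, that is when $x\in\sigma^{-1}(X_j)$.

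For the implication (2) $\Rightarrow$ (1), suppose $\sigma(X_i)=X_{\tau(i)}$ defines a map $\tau:J\to J$. Because $\sigma$ is a bijection of $X$ and the $X_i$ are nonempty and pairwise disjoint, $\tau$ is a bijection of $J$: it is injective since $\sigma$ is, and surjective since every point of $X$, hence every block, lies in some $\sigma(X_i)$. Consequently $\sigma^{-1}(X_j)=X_{\tau^{-1}(j)}$, so $\sigma^{-1}$ also carries each block onto a block. Now for $h\in\mathcal A$ and $x,y$ in a common block $X_i$, the points $\sigma(x),\sigma(y)$ lie in the single block $X_{\tau(i)}$, whence $h(\sigma(x))=h(\sigma(y))$; thus $h\circ\sigma$ is constant on each block and $h\circ\sigma\in\mathcal A$. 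The identical argument applied to $\sigma^{-1}$ gives $h\circ\sigma^{-1}\in\mathcal A$, so $\mathcal{A}$ is invariant under both.

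For (1) $\Rightarrow$ (2), I would exploit the characteristic functions. Invariance under $\sigma$ applied to $h=\chi_{X_j}$ shows $\chi_{\sigma^{-1}(X_j)}\in\mathcal A$, so $\sigma^{-1}(X_j)$ is saturated for every $j$; symmetrically, invariance under $\sigma^{-1}$ shows $\sigma(X_i)$ is saturated for every $i$. Fix $i$ and write $\sigma(X_i)=\bigcup_{j\in K}X_j$ for some nonempty $K\subseteq J$. For each $j\in K$ we have $X_j\subseteq\sigma(X_i)$, hence $\sigma^{-1}(X_j)\subseteq X_i$; but $\sigma^{-1}(X_j)$ is itself a nonempty union of blocks, and the only union of blocks contained in the single block $X_i$ is $X_i$ itself (distinct blocks being disjoint and nonempty). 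Thus $\sigma^{-1}(X_j)=X_i$ for every $j\in K$, and injectivity of $\sigma$ forces $K$ to be a singleton. Hence $\sigma(X_i)$ is exactly one block, which is precisely (2).

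The step I expect to be delicate is collapsing $\sigma(X_i)$ to a single block in (1) $\Rightarrow$ (2): merely knowing that each $\sigma(X_i)$ is a union of blocks is \emph{not} sufficient, as one can build a bijection on a countable disjoint union of infinite blocks with $\sigma(X_i)=X_{2i}\cup X_{2i+1}$, for which $\mathcal{A}$ fails to be invariant (the function $\chi_{X_{2i}}\circ\sigma$ is not constant on $X_i$). What rescues the argument is using \emph{both} invariances simultaneously, so that the saturation of $\sigma^{-1}(X_j)$ can be played against its containment in the single block $X_i$.
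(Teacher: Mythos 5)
Your proof is correct. Note that the paper does not actually prove this lemma --- it defers to \cite{AlexTRS} --- so there is no in-paper argument to compare against; your argument is a complete, self-contained proof, and it correctly handles the two points that genuinely need care: that condition (2), which mentions only $\sigma$, already forces $\sigma^{-1}$ to permute the blocks (via bijectivity of the induced map $\tau$ on $J$), and that in the direction (1) $\Rightarrow$ (2) one must use invariance under both $\sigma$ and $\sigma^{-1}$ to collapse the saturated set $\sigma(X_i)$ to a single block, as your counterexample with $\sigma(X_i)=X_{2i}\cup X_{2i+1}$ rightly illustrates.
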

Let $\mathcal{A}$ be invariant under a bijection $\sigma:X\to X$, $\tilde{\sigma}:\mathcal{A}\to \mathcal{A}$ the automorphism induced by $\sigma$ and consider the crossed product algebra $\mathcal{A}\rtimes_{\tilde{\sigma}}\mathbb{Z}.$ It has been proven \cite{AlexTRS} that the commutant $\mathcal{A}'$ of the algebra $\mathcal{A}$ in the crossed product algebra $\mathcal{A}\rtimes_{\tilde{\sigma}}\mathbb{Z}$ is given precisely by
\begin{equation}\label{eqn18TRS}
\mathcal{A}'=\left\{\sum_{n\in \mathbb{Z}}f_n\delta^n\ :\ f_n=0\text{ on $C_k$ for some $n,k$ such that }\ :\ k\nmid n\right\},
\end{equation} where
\begin{align}\label{eqn19TRS}
C_k& :=\left\{x\in X~|~k\mbox{ is the smallest positive integer such that  } \exists\ X_i,\text{ such that } x,\sigma^k(x)\in X_i\right.\nonumber \\
&\qquad \left. {}\text{ for some }i\in J\right\}.
\end{align}
Now, suppose each of the partition sets $X_i$ is sub-partitioned into a finite disjoint union of its subsets, that is,
$X_i=\bigcup\limits_{r=1}^{s_i}X_{ir}$ where  $\emptyset \neq X_{ir}\subset X_i$ for each $X_{ir}$ and $X_{ir}\cap X_{ir'}=\emptyset$ if $r\neq r'$. Let $\mathcal{A}_S$ denote the algebra of piecewise constant functions on the new partitions. It can easily be seen that $\mathcal{A}$ is a subalgebra of $\mathcal{A}_S.$ We would like to compare the commutants $\mathcal{A}'$ and $\mathcal{A}_S'$ in the crossed product algebras $\mathcal{A}\rtimes_{\tilde{\sigma}}\mathbb{Z}$ and $\mathcal{A}_S\rtimes_{\tilde{\sigma}}\mathbb{Z}$ respectively, for a bijection $\sigma:X\to X.$ To do this, we must have that both $\mathcal{A}$ and $\mathcal{A}_S$ are invariant under $\sigma.$ We give the conditions in the following Lemma.
\begin{lemma}\label{lem6TRS}
Let $\mathcal{A}$ and $\mathcal{A}_S$ be as described above and let $\sigma:X\to X$ be a bijection on $X$ such that both $\mathcal{A}$ and $\mathcal{A}_S$ are invariant under $\sigma.$ If $X_i,X_j\in \mathbb{P}$ such that $X_i=\bigcup\limits_{k=1}^{s_i}X_{ik}$ and $X_i=\bigcup\limits_{l=1}^{s_j}X_{jl},$ and $\sigma(X_i)=X_j,$ then $s_i=s_j.$
\end{lemma}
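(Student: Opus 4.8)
The plan is to combine the refined version of Lemma \ref{lem5TRS} with the fact that $\sigma$ restricts to a bijection of $X_i$ onto $X_j$, and then to count fine pieces. First I would note that invariance of $\mathcal{A}_S$ under $\sigma$ and $\sigma^{-1}$ is precisely invariance with respect to the \emph{refined} partition $\{X_{ir}\}$, since $\mathcal{A}_S$ is the algebra of functions constant on those sets. Applying Lemma \ref{lem5TRS} to this refined partition therefore shows that $\sigma$ maps each subset $X_{ik}$ onto some subset of the refined partition; in other words $\sigma$ permutes the fine pieces.

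Next I would localize this permutation to the pair $(X_i,X_j)$. Since $X_{ik}\subseteq X_i$, its image satisfies $\sigma(X_{ik})\subseteq\sigma(X_i)=X_j$. A fine piece that is contained in $X_j$ must be one of $X_{j1},\dots,X_{j s_j}$, because $\{X_{jl}\}_{l=1}^{s_j}$ partitions $X_j$ and all fine pieces are pairwise disjoint. Hence $\sigma$ sends $\{X_{i1},\dots,X_{i s_i}\}$ into $\{X_{j1},\dots,X_{j s_j}\}$, which defines a map $\tau\colon\{1,\dots,s_i\}\to\{1,\dots,s_j\}$ by the relation $\sigma(X_{ik})=X_{j\tau(k)}$.

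Finally I would verify that $\tau$ is a bijection and conclude. Injectivity is immediate because $\sigma$ is injective and the $X_{ik}$ are nonempty and disjoint. For surjectivity, given any $X_{jl}$ pick $y\in X_{jl}$; since $\sigma(X_i)=X_j$ the restriction $\sigma|_{X_i}$ is a bijection of $X_i$ onto $X_j$, so there is $x\in X_i$ with $\sigma(x)=y$, and the fine piece $X_{ik}$ containing $x$ then satisfies $\sigma(X_{ik})=X_{jl}$. Thus $\tau$ is onto, and counting the two index sets gives $s_i=s_j$. The argument is essentially a counting/bijection argument and presents no serious obstacle; the only point requiring care is the claim that the image of a fine piece of $X_i$ lands among the fine pieces of $X_j$ rather than crossing a boundary, which is exactly what $\sigma(X_i)=X_j$ together with the disjointness of the refined partition guarantees.
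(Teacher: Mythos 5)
Your proof is correct and follows essentially the same route as the paper's: invariance of $\mathcal{A}_S$ means $\sigma$ permutes the fine pieces (Lemma \ref{lem5TRS} applied to the refined partition), the image of a fine piece of $X_i$ must be a fine piece of $X_j$ since $\sigma(X_i)=X_j$, and bijectivity then forces $s_i=s_j$. Your write-up is simply a more careful elaboration of the counting/bijection argument the paper gives in two sentences.
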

\begin{proof}
Since $\sigma(X_i)=X_j$ and $\mathcal{A}_S$ is invariant under $\sigma$, then each set $X_{ik}$ in a partition of $X_i$ is mapped bijectively to a set, say $X_{jk}$ in the partition of $X_j.$ Since $\sigma$ maps $X_i$ bijectively to $X_j$, the the number of sets in the partition for $X_i$ must be the same as the number of sets in the partition for $X_j.$
\end{proof}
In what follows we make a comparison of the commutants the algebras $\mathcal{A}$ and $\mathcal{A}_S.$ We shall consider the following cases.
\begin{enumerate}
\item Only one of the partition sets, say $X_i,$ is sub-partitioned into a union of, say, $s$ subsets, that is,
$$
X_i=\bigcup_{j=1}^sX_{ij}
$$  which corresponds to adding a finite number of jump points in one partitioning interval of the real line.
\item A finite number of partition sets $X_1,X_2,\cdots,X_k\subset C_k$ are each partitioned into a union of, say, $s$ subsets, that is
$$
X_i=\bigcup_{j=1}^sX_{ij} \text{ for each }i=1,2,\cdots,k.
$$
This corresponds to adding jump points into different intervals on the real line.
\end{enumerate}
\subsection{Partitioning one set}
Suppose a set, say $X_0,$ is partitioned into a finite union of, say  $s$ subsets, that is
$$X_0=\bigcup_{j=1}^sX_{0j}$$ where $X_{0j}\neq \emptyset$ for all $j=1,\cdots,s$ and $X_{0j}\cap X_{0j'}=\emptyset$ if $j\neq j'.$
Then by Lemma \ref{finitep}, $\sigma(X_0)=X_0,$ that is $X_0\subset C_1$ where $C_k$ is defined by \eqref{eqn19TRS}.\par
Now, let
\begin{align}\label{eqn20TRS}
\tilde{C}_k& :=\left\{x\in X_0~|~k\mbox{ is the smallest positive integer such that  } \exists\ X_{0j},\text{ such that }\right.\nonumber \\
&\qquad \left. {} x,\sigma^k(x)\in X_{0j}\text{ for some }j\in \{1,\cdots,s\}\right\}.
\end{align}
Then each $X_{0j}\subset \tilde{C}_k$ for some $k\in\{1,\cdots,s\}.$ Therefore, for every $n\in \mathbb{Z},$
$$
Sep_{\mathcal{A}_S}^n(\mathbb{R})=Sep_{\mathcal{A}}^n(\mathbb{R})\bigcup\left(\cup_{k\nmid n}\tilde{C}_k\right)
$$
and the comparison of the commutants is given by;
$$
\mathcal{A}_S'=\mathcal{A}'\setminus\left\{\sum_{n\in \mathbb{Z}}f_n\delta^n\ :\ f_n\neq 0\text{ on }\tilde{C}_k\text{ for some $n,k$ such that }k\nmid n\right\}.
$$
\subsection{Partitioning more than one set}
Take sets $X_1,\cdots,X_K\subset C_k$, that is $\sigma^k(X_i)=X_i$ for each $i=1,\cdots,k.$ Since these sets are mapped bijectively onto each other by $\sigma,$ Lemma \ref{lem6TRS} implies that each of these sets must be partitioned as a union of the same number of subsets, that is
$$
X_i=\bigcup_{j=1}^sX_{ij} \text{ for each } i=1,\cdots,s.
$$
In the following Theorem, we give the comparison of the commutants.
\begin{theorem}
Suppose the sets $X_1,\cdots,X_K\subset C_k$ are each partitioned into a finite union of subsets as described above and $\sigma:X\to X$ is a bijection such that both $\mathcal{A}$ and $\mathcal{A}_S$ are invariant under $\sigma.$ Then
$$
\mathcal{A}_S'=\mathcal{A}'\setminus\left\{\sum_{n\in \mathbb{Z}}f_n\delta^n\ :\ f_n\neq 0\text{ on }\tilde{C}_{kl}\text{ for some $n,k,l$ such that }l\nmid {n\over k} \right\}.
$$
where
\begin{align}\label{eqn21TRS}
\tilde{C}_k& :=\left\{x\in X_i~|~k\mbox{ is the smallest positive integer such that  } \exists\ X_{ij},\text{ such that }\right.\nonumber \\
&\qquad \left. {} x,\sigma^k(x)\in X_{ij}\text{ for some }j\in \{1,\cdots,s\}\right\}.
\end{align}
\end{theorem}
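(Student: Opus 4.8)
The plan is to compute the separation set $Sep^n_{\mathcal{A}_S}(X)$ for every $n\in\mathbb{Z}$ and then read off $\mathcal{A}_S'$ from the separation-set description of commutants (Theorem \ref{thm1TRS}, formula \eqref{eqn4TRS}), comparing coefficient-by-coefficient with the analogous description \eqref{eqn18TRS} of $\mathcal{A}'$. Both commutants are realized inside $\mathcal{A}_S\rtimes_{\tilde\sigma}\mathbb{Z}$, so the comparison reduces entirely to comparing where the coefficient functions $f_n\in\mathcal{A}_S$ are forced to vanish. Since refining the partition only enlarges the separation sets, $Sep^n_{\mathcal{A}}(X)\subseteq Sep^n_{\mathcal{A}_S}(X)$, which already gives $\mathcal{A}_S'\subseteq\mathcal{A}'$ and shows the difference is controlled by the extra points entering the separation sets.

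The heart of the argument is the decomposition $C_k=\bigcup_l\tilde{C}_{kl}$, mirroring \eqref{eqn11TRS}. Fix $x\in X_i\subset C_k$. Since $\sigma$ permutes the partition blocks (Lemma \ref{lem5TRS}) and $\sigma^k(X_i)=X_i$, each image $\sigma^m(X_i)$ is either equal to or disjoint from $X_i$; hence $\sigma^m(x)\in X_i$ forces $\sigma^m(X_i)=X_i$ and therefore $k\mid m$. In particular the smallest $m\geq1$ with $x$ and $\sigma^m(x)$ lying in a common refined block $X_{ij}$ is necessarily a multiple of $k$, say $m=kl$; this is precisely the statement $x\in\tilde{C}_{kl}$. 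Lemma \ref{lem6TRS} ensures that each block in the cycle is split into the same number $s$ of pieces and that $\sigma^k$ permutes these pieces, so $l$ runs over a subset of $\{1,\dots,s\}$ and every $x\in C_k$ lies in exactly one $\tilde{C}_{kl}$.

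With this in hand I would translate into separation sets using the divisibility equivalence quoted before the theorem: for $k\neq0$ with $k\mid n$ one has $kl\mid n\iff l\mid n/k$. For a point $x\in\tilde{C}_{kl}\subseteq C_k$, membership in $Sep^n_{\mathcal{A}_S}(X)$ is controlled by $kl\nmid n$ and membership in $Sep^n_{\mathcal{A}}(X)$ by $k\nmid n$. If $k\nmid n$ then $kl\nmid n$ as well (as $k\mid kl$), so $x$ already lies in $Sep^n_{\mathcal{A}}(X)$ and nothing is added. If $k\mid n$, then $x\notin Sep^n_{\mathcal{A}}(X)$, whereas $x\in Sep^n_{\mathcal{A}_S}(X)$ exactly when $l\nmid n/k$. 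Points outside the refined cycle keep their period and contribute identically to both separation sets. This yields
\[
Sep^n_{\mathcal{A}_S}(X)=Sep^n_{\mathcal{A}}(X)\cup\bigcup_{l\,:\,l\,\nmid\,n/k}\tilde{C}_{kl},
\]
the second union being empty unless $k\mid n$.

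Substituting this into the commutant description for $\mathcal{A}_S$ finishes the proof: $\sum_n f_n\delta^n$ lies in $\mathcal{A}_S'$ iff each $f_n$ vanishes on $Sep^n_{\mathcal{A}}(X)$, i.e. $\sum_n f_n\delta^n\in\mathcal{A}'$, and additionally vanishes on every $\tilde{C}_{kl}$ with $l\nmid n/k$. Hence the elements of $\mathcal{A}'$ excluded from $\mathcal{A}_S'$ are exactly those for which some $f_n$ is nonzero on some $\tilde{C}_{kl}$ with $l\nmid n/k$, which is the asserted set difference. I expect the first step to be the main obstacle: one must verify carefully that refining within a single $\sigma$-cycle multiplies the coarse period $k$ only by an integer factor $l$, and that $C_k$ decomposes as the disjoint union $\bigcup_l\tilde{C}_{kl}$. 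Once that combinatorial bookkeeping and the equivalence $kl\mid n\iff l\mid n/k$ are in place, the passage to the commutants is purely formal.
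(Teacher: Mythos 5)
Your proposal is correct and follows essentially the same route as the paper's proof: decompose $C_k$ as the union of the $\tilde{C}_{kl}$, derive $Sep^n_{\mathcal{A}_S}=Sep^n_{\mathcal{A}}\cup\bigl(\bigcup_{l\nmid n/k}\tilde{C}_{kl}\bigr)$ for $k\mid n$ (and equality otherwise), and read off the commutants from the separation sets. You in fact supply more detail than the paper does, notably the verification that the refined period of a point in $C_k$ must be an integer multiple $kl$ of the coarse period, which the paper asserts without argument.
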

\begin{proof}
Recall that the commutant $\mathcal{A}_S'$ is given by
$$
\mathcal{A}_S'=\left\{\sum_{n\in \mathbb{Z}}f_n\delta^n\ :\ f_n=0 \text{ on }Sep_{\mathcal{A}_S}^n(\mathbb{R})\right\}.
$$
By invariance of $\mathcal{A}_S$ under $\sigma,~\sigma$ maps the sets $X_{ij}$ bijectively onto each other. Since each $X_i\subset C_k$ for each $i=1,\cdots,k$, then each $X_{ij}\subset \tilde{C}_{kl}$ for some $l\in\{1,\cdots,s\},$ where $\tilde{C}_k$ is given by \eqref{eqn21TRS}.\par
Observe that $Sep_{\mathcal{A}_S}^n(\mathbb{R})=Sep_{\mathcal{A}}^n(\mathbb{R})$ for all $n$ such that $k\nmid n$ and for those $n$ such that $k\mid n,$ we have
$$
Sep_{\mathcal{A}_S}^n(\mathbb{R})=Sep_{\mathcal{A}}^n(\mathbb{R})\bigcup \left(\bigcup_{l=1,l\nmid {n\over k}}^s\tilde{C}_{kl}\right).
$$
Therefore, the comparison of the commutants is given by
$$
\mathcal{A}_S'=\mathcal{A}'\setminus\left\{\sum_{n\in \mathbb{Z}}f_n\delta^n\ :\ f_n\neq 0\text{ on }\tilde{C}_{kl}\text{ for some $n,k,l$ such that }l\nmid {n\over k} \right\}.
$$
\end{proof}
\begin{remark}
\end{remark}
\begin{enumerate}
\item For piecewise constant functions on the real line, adding $s$ jump points into one or more intervals corresponds to partitioning the interval/intervals into $2s+1$ sub-intervals (recall that we consider jump points to be intervals of zero length). Since we demand that jump points are mapped to jump points, each of the new sub-intervals belongs to $\tilde{C}_{kl}$ for some $l\in \{1,\cdots,s+1\}.$ For example, we do not have any new sub-intervals in say, $\tilde{C}_{k(2s+1)}.$ However, if we partition each of the sets $X_1,\cdots,X_k\subset C_k$ into a union of $2s+1$ subsets it's possible to have some of the new subsets in $\tilde{C}_{k(2s+1)}$ (if $\sigma$ maps the new subsets cyclically).
\item Also, in the general sets case, there is a possibility of having intervals in $C_{\infty},$ where  by $C_{\infty}$ we mean
$$
C_{\infty}:=\{x\in X\ :\ \not\exists\ j\in J\text{ such that }x,\sigma^k(x)\in X_j\text{ for all }k\geqslant 1\}.
$$
However, if two sets, say $X_i,X_r\subset C_{\infty}$ are each partitioned into a union of say $s$ subsets, that is
$$
X_i=\bigcup_{j=1}^s X_{ij}\text{ and } X_{r}=\bigcup_{j=1}^s X_{rj},
$$
then each of the new subsets $X_{ij},X_{rj}$ belong to $C_{\infty}$ and hence do not contribute anything new to the commutant.
\end{enumerate}

\par
\subsubsection*{Acknowledgement}
This research was supported by the Swedish International Development Cooperation Agency (Sida), International Science Programme (ISP) in Mathematical Sciences (IPMS), Eastern Africa Universities Mathematics Programme (EAUMP).  Alex Behakanira Tumwesigye is also grateful to the research environment Mathematics and Applied Mathematics (MAM), Division of Applied Mathematics, M\"alardalen University for providing an excellent and inspiring environment for research education and research.


\end{document}